\newtheorem{theorem}{Theorem}
\newtheorem{lemma}[theorem]{Lemma}
\newtheorem{corollary}[theorem]{Corollary}
\newtheorem{proposition}[theorem]{Proposition}
\newtheorem{remark}{Remark}
\begin{document}
\title{Convergence of the Gauss-Newton method for a special class of 
systems of equations under a majorant condition}

\author{  M. L. N. Gon\c calves \thanks{IME/UFG, Campus II - Caixa
    Postal 131, CEP 74001-970 - Goi\^ania, GO, Brazil (E-mail:{\tt
      maxlng@mat.ufg.br}). The author was supported in part by CAPES and
    CNPq Grant 473756/2009-9.} \and P. R. Oliveira \thanks{COPPE-Sistemas, Universidade Federal do Rio de Janeiro,
Rio de Janeiro, RJ 21945-970, Brazil (Email: {\tt poliveir@cos.ufrj.br}).
This author was supported in part by CNPq.} }

\maketitle
\begin{abstract}

In this paper, we study the Gauss-Newton method for a special class of  systems of nonlinear equation. Under the hypothesis
that the derivative of the function under consideration satisfies a majorant condition, semi-local convergence analysis
 is presented. In this analysis the conditions and proof of convergence are simplified by using a simple majorant condition to define regions where the Gauss-Newton sequence is ``well behaved". Moreover, special cases of the general theory are presented as applications.

\end{abstract}

\noindent {{\bf Keywords:} Gauss-Newton method; majorant condition; nonlinear systems of equations; semi-local convergence.}

\maketitle

\section{Introduction}\label{sec:int}
Consider the {\it  systems of nonlinear equations}
\begin{equation} \label{eq:11}
F(x)=0,
\end{equation}
where $F:\Omega\to \mathbb{R}^m$ is a continuously differentiable function and $\Omega\subseteq\mathbb{R}^n$ is an open set.

When $F '(x)$ is invertible, 
the Newton method and its variant (see \cite{F08,FS04,MAX1,FS06}) are the most efficient methods known for solving \eqref{eq:11}. However, when $F '(x)$ is not
necessarily invertible, a generalized Newton method, called the Gauss-Newton method (see \cite{MAX3,MAX4,MAX2}), defined by
$$
x_{k+1}={x_k}- F'(x_k)^{\dagger}F(x_k),
\qquad k=0,1,\ldots,
$$
where $F'(x_k)^{\dagger}$ denotes the Moore-Penrose inverse of the linear operator $F'(x_k)$,
finds least squares solutions of \eqref{eq:11} which may or may not be solutions of \eqref{eq:11}.
These least squares solutions are related
to the nonlinear least squares problem
\begin{equation}\label{eq:p1}
\min_{x\in \Omega } \;\|F(x)\|^2,
\end{equation}
that is, they are stationary points of $G(x)=\|F(x)\|^2.$
It is worth noting that, if $F'(x)$ is surjective, then least squares solutions of systems of nonlinear equations are also solutions of systems of nonlinear equations.

We shall consider the same special class of  systems of nonlinear equations studied in \cite{MR817125,MR2437700,Li2010}, i.e., systems of nonlinear equations where the function $F$ under consideration satisfies
 \begin{equation} \label{rank1}
\left\|F'(y)^{\dagger}(I_{\mathbb{R}^m}-F'(x)F'(x)^{\dagger})F(x)\right\| \leq \kappa \|x-y\|, \qquad \forall \;x,y \;\in \Omega
\end{equation}
 for some $0 \leq \kappa < 1$ and $I_{\mathbb{R}^m}$ denotes the identity operator on $\mathbb{R}^m$.
 This special class of nonlinear systems of equation contains underdertermined systems with surjective derivatives, because when $F'(x)$ is surjective we can prove that
 $k=0$ in \eqref{rank1}.

 In recent years, papers have addressed the issue of convergence of the Newton method, including the Gauss-Newton method, by relaxing the assumption of Lipschitz continuity of the derivative (see \cite{Dedieu2002,MR16517501,MAX3,F08,FS04,MAX1,MAX4,MAX2,FS06,MR2437700,Li2010,XW10,MR2359960,MR2429168} and references therein). These new assumptions also allow us to unify previously unrelated convergence results, namely results for analytical functions ($\alpha$-theory or $\gamma$-theory) and the classical results for functions with Lipschitz derivative. The main new conditions that relax the condition of Lipschitz continuity of the derivative include the majorant condition, which we will use, and Wang's condition, introduced
in \cite{XW10} and used for example in \cite{Li2010,MR2359960,MR2429168} to study the Gauss-Newton method. In fact, under the hypothesis in this paper, it can be shown that these conditions are equivalent. However, the formulation as a majorant condition is in a sense better than Wang's condition, as it provides a clear relationship between the majorant function and the nonlinear function under consideration. Besides, the majorant condition provides a simpler proof of convergence.

Following the ideas of the semi-local convergence analysis in \cite{MAX4,FS06}, we will present a new semi-local convergence analysis of the Gauss-Newton method for solving \eqref{eq:11}, where $F$ satisfies \eqref{rank1}, under a majorant condition. The convergence analysis presented here communicates the conditions and proof in a quite simple manner. This is possible thanks to our majorant condition and to a demonstration technique introduced in~\cite{FS06} which, instead of looking only to the sequence generated, identifies regions where, for the problem under consideration, the Gauss-Newton sequence is well behaved, as compared with a method applied to an auxiliary function associated with the majorant function.
Moreover, two unrelated previous results relating to the Gauss-Newton method are unified, namely, results for analytical functions under an $\alpha$-condition and the classical result for functions with Lipschitz derivative. Besides, convergence results for underdetermined systems with surjective derivatives will be also given.

 The paper is organized as follows. Sect.
\ref{sec:int.1} lists some notations and basic results used in
the presentation. Sect. \ref{lkant} states and proves the main results. Finally, special cases of the general theory are presented as applications
in Sect. \ref{sec:ec}.

\subsection{Notation and auxiliary results} \label{sec:int.1}
The following notations and results are used throughout this
presentation. Let $\mathbb{R}^n$ be with a norm $\|.\|$.  The open and closed balls
at $a \in \mathbb{R}^n$ and radius $\delta>0$ are denoted, respectively by
$$
B(a,\delta) :=\{ x\in \mathbb{R}^n ;\; \|x-a\|<\delta \}, \qquad B[a,\delta] :=\{ x\in \mathbb{R}^n ;\; \|x-a\|\leqslant \delta \}.
$$

Given a linear operator $A: \mathbb{R}^n \to \mathbb{R}^m$ (or an $n\times m$ matrix), the Moore-Penrose inverse of $A$
 is the linear operator $A^\dagger:\mathbb{R}^m \to \mathbb{R}^n$ (or an $m\times n$ matrix) which satisfies:
$$AA^{\dagger}A=A, \quad A^{\dagger}AA^{\dagger}=A^{\dagger}, \quad (AA^{\dagger})^{*}=AA^{\dagger}, \quad (A^{\dagger}A)^{*}=A^{\dagger}A, $$
where $A^*$ denotes the adjoint of $A$.
The Kernel and image of A are denoted by $Ker(A)$ and $im(A)$, respectively. It is easily seen
from the definition of the Moore-Penrose inverse that
\begin{equation}\label{1111}
AA^{\dagger}= \Pi_{Ker(A)^{\bot}}, \qquad A^{\dagger}A= \Pi_{im(A)},
\end{equation}
 where $\Pi_{E}$ denotes the projection of $\mathbb{R}^n$ onto subspace E.

We use  $\mbox{I}_{\mathbb{R}^m}$ to denote the identity operator on ${\mathbb{R}^m}$. If $A$ is surjective, then
 \begin{equation}\label{121212}
A^{\dagger}=A^{*}(AA^*)^{-1}, \qquad AA^{\dagger}=I_{\mathbb{R}^m}, \qquad (AA^{\dagger})^{\dagger}=AA^{\dagger}.
\end{equation}

\begin{lemma}(Banach's Lemma) \label{lem:ban1}
Let $B:\mathbb{R}^n \to \mathbb{R}^n$ be a continuous linear operator.
If $\|B-I_{\mathbb{R}^n}\|<1$, then $B$ is invertible and $ \|B^{-1}\|\leq
1/\left(1- \|B-I_{\mathbb{R}^n}\|\right). $
\end{lemma}
\begin{proof} See the proof of Lemma 1, p.189 of Smale \cite{S86} with $A=I_{\mathbb{R}^n}$ and $c=\|B-I_x\|$.
\end{proof}
The next lemma is proved on p.43 of \cite{LAW1} (see also \cite{ISRAEL2003}).  It is on the perturbation of the Moore-Penrose inverse of $A$.

\begin{lemma} \label{lem:ban}
Let $A, B: \mathbb{R}^n \to \mathbb{R}^m$ be continuous linear operators. Assume that
$$1 \leq rank(B)\leq rank(A), \qquad \|A^\dagger\|\|A-B\|<1.$$
Then
$$
rank(A)= rank(B), \qquad \|A^\dagger\|\leq \frac{\|B^\dagger\|}{ 1- \|B^\dagger\|\|A-B\|}.
$$
\end{lemma}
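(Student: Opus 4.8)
The plan is to establish the two conclusions separately, first the identity $rank(A)=rank(B)$ and then the norm estimate, treating the rank statement as the true heart of the matter: the Moore--Penrose inverse is discontinuous precisely at perturbations that change the rank, so the hypotheses should be read as a quantitative guarantee that no rank drop occurs. Throughout I would write $E:=B-A$, so that $\|A^\dagger\|\,\|E\|<1$ by assumption, and set $r:=rank(A)$ together with $V:=Ker(A)^{\bot}$, a subspace of dimension $r$.

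For the rank equality I would first prove $rank(B)\ge rank(A)$ by showing that $B$ is injective on $V$. Using the Moore--Penrose identity $A^\dagger A=\Pi_{Ker(A)^{\bot}}$ from \eqref{1111}, every $v\in V$ satisfies $A^\dagger A v=v$, so that $A^\dagger B v=A^\dagger A v+A^\dagger E v=v+A^\dagger E v$ and therefore $\|A^\dagger B v\|\ge(1-\|A^\dagger\|\,\|E\|)\,\|v\|$. Since $\|A^\dagger\|\,\|E\|<1$, this forces $Bv\neq 0$ for $v\neq 0$, so $B$ restricted to the $r$-dimensional space $V$ is injective and hence $rank(B)\ge\dim B(V)=r$. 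Combined with the standing hypothesis $rank(B)\le rank(A)$ this yields $rank(A)=rank(B)=r$.

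For the norm estimate I would pass to singular values, working in the Euclidean operator norm in which the Moore--Penrose inverse is naturally defined. Because $rank(A)=rank(B)=r$, the smallest nonzero singular values are the $r$-th ones, and $\|A^\dagger\|=1/\sigma_r(A)$, $\|B^\dagger\|=1/\sigma_r(B)$. Weyl's perturbation inequality gives $|\sigma_r(A)-\sigma_r(B)|\le\|A-B\|$, hence $\sigma_r(A)\ge\sigma_r(B)-\|A-B\|$; rewriting this as $1/\|A^\dagger\|\ge(1-\|B^\dagger\|\,\|A-B\|)/\|B^\dagger\|$ and inverting produces exactly $\|A^\dagger\|\le\|B^\dagger\|/(1-\|B^\dagger\|\,\|A-B\|)$.

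The main obstacle is not the algebra but making the two halves lock together: the reciprocal-singular-value formulas and the index matching in Weyl's inequality are legitimate only after the rank has been pinned down, which is why the injectivity argument must come first, and one must separately verify that the denominator $1-\|B^\dagger\|\,\|A-B\|$ is positive, which is where the rank equality and the smallness hypothesis re-enter. As an alternative to Weyl's inequality, the same estimate can be obtained with Banach's Lemma (Lemma~\ref{lem:ban1}) by factoring the restriction of $B$ to the common row space through $A$, keeping the whole argument inside the toolkit already assembled in this section.
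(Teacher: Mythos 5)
The paper itself offers no proof of this lemma---it is quoted from Lawson and Hanson \cite{LAW1}, p.~43---so your proposal has to stand on its own. The first half does: the injectivity of $B$ on $Ker(A)^{\bot}$ via $A^{\dagger}Av=v$ and $\|A^{\dagger}Bv\|\geq(1-\|A^{\dagger}\|\,\|A-B\|)\|v\|$ correctly yields $rank(B)\geq rank(A)$, hence equality. The gap is exactly at the point you flag and then defer: the final inversion requires $1-\|B^{\dagger}\|\,\|A-B\|>0$, and this does \emph{not} follow from the stated hypotheses---no amount of ``re-entering'' of the rank equality produces it. Take $n=m=1$, $A=(1)$, $B=(1/10)$: then $1\leq rank(B)\leq rank(A)$ and $\|A^{\dagger}\|\,\|A-B\|=9/10<1$, yet $\|B^{\dagger}\|\,\|A-B\|=9$, the right-hand side of the claimed bound equals $-10/8$, and the conclusion fails. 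So the statement as printed is false, and no proof of it can be completed; your Weyl estimate $\sigma_r(A)\geq\sigma_r(B)-\|A-B\|$ is true but vacuous here because its right-hand side may be negative.

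The resolution is that the printed hypotheses have $A$ and $B$ interchanged relative to the version the paper actually invokes in Proposition~\ref{wdns}, where the reference matrix is $B=F'(x_0)$, the perturbed one is $A=F'(x)$, and the available facts are $1\leq rank(B)$, $rank(A)\leq rank(B)$ and $\|B^{\dagger}\|\,\|A-B\|<1$. Under those hypotheses your argument closes immediately: the injectivity step, run with $B^{\dagger}B=\Pi_{Ker(B)^{\bot}}$, gives $rank(A)\geq rank(B)$ and hence equality $r:=rank(A)=rank(B)$, and Weyl's inequality gives $\sigma_r(A)\geq\sigma_r(B)-\|A-B\|=(1-\|B^{\dagger}\|\,\|A-B\|)/\|B^{\dagger}\|>0$, where positivity of the denominator is now literally the hypothesis, so inverting is legitimate. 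Two smaller caveats: the identities $\|A^{\dagger}\|=1/\sigma_r(A)$ and Weyl's inequality are specific to the spectral norm, whereas Section~\ref{sec:int.1} nominally equips $\mathbb{R}^n$ with an arbitrary norm; and the identity you cite from \eqref{1111} is the correct one, $A^{\dagger}A=\Pi_{Ker(A)^{\bot}}$, even though \eqref{1111} as printed has the two projections swapped.
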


\section{Semi-local analysis for the Gauss-Newton method} \label{lkant}
Our goal is to state and prove a semi-local theorem of the Gauss-Newton method for solving nonlinear systems of equations, where
the function under consideration satisfies \eqref{rank1}. First, we will prove that this theorem holds for an auxiliary function
associated with the majorant function. Then, we will prove well-definedness of the Gauss-Newton method and convergence. Convergence rates
will also be established. The statement of the theorem is:
\begin{theorem}\label{th:knt}
Let $\Omega\subseteq\mathbb{R}^n$ be an open set and $F:\Omega\to \mathbb{R}^m$ a continuously differentiable function.
 Suppose that
 \begin{equation} \label{rank}
\left\|F'(y)^{\dagger}(I_{\mathbb{R}^m}-F'(x)F'(x)^{\dagger})F(x)\right\| \leq \kappa \|x-y\|,\qquad \forall \;x,y \;\in \Omega
\end{equation}
for some $0 \leq \kappa < 1$. Take $x_0\in\Omega$ such that $\beta:=\|F'(x_0)^{\dagger}F(x_0)\|>0$, $F'(x_0)\neq 0$ and
 \begin{equation} \label{posto}
 rank(F'(x))\leq rank(F'(x_0)), \qquad \forall \; x \in \Omega.
\end{equation}
Suppose that there exist $R>0$ and a continuously differentiable function $f:[0,\; R)\to \mathbb{R}$ such that,
$B(x_0,R)\subseteq \Omega$,
 \begin{equation}\label{KH}
 \|F'(x_0)^{\dagger}\| \|F'(y)-F'(x)\| \leq f'(\|y-x\|+\|x-x_0\|)-f'(\|x-x_0\|),
 \end{equation}
  for any $x,y\in B(x_0, R)$, $\|x-x_0\|+\|y-x\|< R$,
     \begin{itemize}
  \item[{\bf h1)}] $f(0)=0$, $f'(0)=-1$;
  \item[{\bf h2)}] $f'$ is convex and strictly increasing.
  \end{itemize}
 Take $\lambda\geq 0$ such that $\lambda\geq -\kappa f'(\beta)$ and consider the auxiliary function $h_{ \beta, \lambda}:[0,\; R)\to \mathbb{R}$,
 \begin{equation}\label{defh}
h_{ \beta, \lambda}(t):= \beta + \lambda  t+ f(t).
\end{equation}
If $h_{ \beta, \lambda}$ satisfies
\begin{itemize}
  \item[{\bf h3)}]   $h_{ \beta, \lambda}(t)=0$   for some  $t \in (0,  R)$,
\end{itemize}
then $h_{ \beta, \lambda}(t)$ has a smallest zero   $t_* \in (0,  R)$,  the  sequences
for solving  $ h_{ \beta, \lambda}(t)=0$ and $F(x)=~0$, with starting point $t_0=0$ and $x_0$,  respectively,
\begin{equation}\label{ns.KT}
    t_{k+1} ={t_{k}}-h_{ \beta,0}'(t_{k}) ^{-1}h_{ \beta, \lambda}(t_{k}),\quad x_{k+1}={x_k}-F'(x_k)^{\dagger}F(x_k), \quad k=0,1,\ldots\,,
\end{equation}
are well defined,  $\{t_{k}\}$ is strictly increasing, is contained in
  $[0,t_*)$, and converges  to $t_*$, $\{x_k\}$ is contained in $B(x_0,t_*)$, converges to a point  $x_*\in B[x_0, t_*]$ such that
$F'(x_*)^{\dagger}F(x_*)=0$  and
\begin{equation}\label{eq:bd}
  \|x_{k+1}-x_{k}\|   \leq  t_{k+1}-t_{k} , \qquad \|x_*-x_{k}\|   \leq  t_*-t_{k},\qquad  k=0, 1, \ldots\, ,
  \end{equation}
 \begin{equation}\label{eq:002}
\|x_{k+1}-x_{k}\|\leq   \frac{t_{k+1}-t_{k}}{(t_{k}-t_{k-1})^2} \|x_k-x_{k-1}\|^2,
 \quad k=1,2, \ldots\,.
\end{equation}
Moreover, if $\lambda=0$ ($\lambda=0$ and  ${h_{ \beta, 0}'}(t_*)<0$), the sequences $\{t_k\}$ and $\{x_k\}$ converge Q-linearly and R-linearly
 (Q-quadratically and R-quadratically) to $t_{*}$ and  $x_{*}$, respectively.
\end{theorem}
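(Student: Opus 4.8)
The plan is to decouple the statement into a scalar \emph{majorant} iteration and the Gauss--Newton iteration, and to show that the scalar steps dominate the vector steps term by term. First I would analyze $h_{\beta,\lambda}$ and the sequence $\{t_k\}$ on their own; then I would run a single induction transferring every scalar estimate to $\{x_k\}$, using Lemma~\ref{lem:ban} to control $F'(x_k)^{\dagger}$ and splitting the Gauss--Newton residual into a Taylor part governed by the majorant \eqref{KH} and a projected-residual part governed by \eqref{rank}.

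\textbf{Scalar step.} Since $f'$ is convex and increasing, $h_{\beta,\lambda}$ is convex with $h_{\beta,\lambda}(0)=\beta>0$ and $h'_{\beta,\lambda}(0)=\lambda-1$; because a convex function positive at $0$ can vanish only if its derivative there is negative, h3) forces $\lambda<1$ and yields a smallest zero $t_*\in(0,R)$ with $h_{\beta,\lambda}>0$ on $[0,t_*)$ and $h'_{\beta,\lambda}(t_*)\le0$. The latter gives $f'(t_*)\le-\lambda\le0$, hence $f'<0$ on $[0,t_*)$, so \eqref{ns.KT} is well defined there and, as $h_{\beta,\lambda}(t_k)>0$, strictly increasing. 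To see $t_k\to t_*$ I would prove $t_{k+1}\le t_*$ whenever $t_k\in[0,t_*)$: writing $t_{k+1}-t_*$ over $f'(t_k)<0$, this reduces to $(t_k-t_*)f'(t_k)\ge h_{\beta,\lambda}(t_k)$, i.e. to $\int_{t_k}^{t_*}(\lambda+f'(s)-f'(t_k))\,ds\ge0$, which holds because $\lambda\ge0$ and $f'$ is increasing. Monotone and bounded, $\{t_k\}$ converges to a zero of $h_{\beta,\lambda}$, necessarily $t_*$. I would also record $t_1=\beta$ and the identity $h_{\beta,\lambda}(t_k)=\lambda(t_k-t_{k-1})+\int_{t_{k-1}}^{t_k}(f'(s)-f'(t_{k-1}))\,ds$, obtained from the Newton relation at step $k-1$.

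\textbf{Transfer to $\{x_k\}$.} The core is one induction proving simultaneously $\|x_k-x_0\|\le t_k$, $rank(F'(x_k))=rank(F'(x_0))$, $\|F'(x_k)^{\dagger}\|\le\|F'(x_0)^{\dagger}\|/(-f'(\|x_k-x_0\|))$, and $\|x_{k+1}-x_k\|\le t_{k+1}-t_k$. The rank identity and the bound on $F'(x_k)^{\dagger}$ come from Lemma~\ref{lem:ban} together with \eqref{posto} and the estimate $\|F'(x_0)^{\dagger}\|\,\|F'(x_k)-F'(x_0)\|\le f'(\|x_k-x_0\|)+1<1$ read off \eqref{KH} and $f'<0$ on $[0,t_*)$. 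For the step bound I would write $F'(x_k)^{\dagger}F(x_k)=F'(x_k)^{\dagger}\!\int_0^1[F'(x_{k-1}+\tau(x_k-x_{k-1}))-F'(x_{k-1})](x_k-x_{k-1})\,d\tau+F'(x_k)^{\dagger}(I_{\mathbb{R}^m}-F'(x_{k-1})F'(x_{k-1})^{\dagger})F(x_{k-1})$, bound the first term by the majorant (collapsing, via convexity of $f'$, to the scalar integral above) and the second by $\kappa\|x_k-x_{k-1}\|$ through \eqref{rank}. Combining and using $\|x_k-x_{k-1}\|\le t_k-t_{k-1}$ leaves a residual $\kappa$-term that is absorbed precisely because $\lambda\ge-\kappa f'(\beta)\ge-\kappa f'(t_k)$ (here $t_k\ge t_1=\beta$), so the vector step is dominated by $h_{\beta,\lambda}(t_k)/(-f'(t_k))=t_{k+1}-t_k$; the base case is the equality $\|x_1-x_0\|=\beta=t_1$. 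Then \eqref{eq:bd} and Cauchyness of $\{x_k\}$ follow by telescoping $\sum(t_{j+1}-t_j)$, giving $x_*\in B[x_0,t_*]$; since $\|F'(x_k)^{\dagger}F(x_k)\|=\|x_{k+1}-x_k\|\to0$ and the Moore--Penrose inverse is continuous on the constant-rank set, passing to the limit gives $F'(x_*)^{\dagger}F(x_*)=0$.

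\textbf{Refinement, rates, and the main difficulty.} For \eqref{eq:002} I would re-run the step estimate without relaxing $\|x_k-x_{k-1}\|$ to $t_k-t_{k-1}$, using two monotonicities (in $a$, by convexity of $f'$, and in $b$ via $b\mapsto b^{-2}\!\int_0^b(f'(a+u)-f'(a))\,du$ nondecreasing) to pull out the factor $\|x_k-x_{k-1}\|^2/(t_k-t_{k-1})^2$, matching the $\kappa$-term against the scalar linear term $\lambda(t_k-t_{k-1})$ inside $h_{\beta,\lambda}(t_k)$. For the rates I would linearize \eqref{ns.KT} at $t_*$, obtaining asymptotic ratio $\lambda/(-f'(t_*))$, so $\{t_k\}$ converges Q-linearly when $h'_{\beta,\lambda}(t_*)<0$ and Q-quadratically when $\lambda=0$ with $f'(t_*)<0$ (note $\lambda=0$ forces $\kappa=0$, since $f'(\beta)<0$); the R-linear and R-quadratic rates for $\{x_k\}$ are inherited from $\|x_*-x_k\|\le t_*-t_k$. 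I expect the main obstacle to be exactly this term-by-term domination in the step estimate: the Gauss--Newton residual carries a genuinely linear contribution $\kappa\|x_k-x_{k-1}\|$ that the pure Taylor/majorant machinery does not see, and forcing the scalar sequence to dominate it is what pins down the hypothesis $\lambda\ge-\kappa f'(\beta)$ and the bookkeeping $t_k\ge\beta$; a secondary technical point is ensuring the constant-rank property, hence continuity of the pseudo-inverse, persists up to the limit $x_*$.
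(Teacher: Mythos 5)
Your overall architecture is the same as the paper's: analyze the scalar Newton-type iteration for $h_{\beta,\lambda}$ first, then transfer the estimates to $\{x_k\}$ by induction, splitting the Gauss--Newton residual as $F(G_F(x))=E_F(x,G_F(x))+(I_{\mathbb{R}^m}-F'(x)F'(x)^{\dagger})F(x)$, controlling $F'(x_k)^{\dagger}$ via Lemma~\ref{lem:ban}, and absorbing the extra $\kappa$-term using $\lambda\geq-\kappa f'(\beta)$ together with $t_k\geq\beta$ (the paper packages the induction invariant as membership in the sets $K(t)$ of \eqref{E:K}, but that is only a bookkeeping difference from your simultaneous induction). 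The core estimate leading to $\|x_{k+1}-x_k\|\leq t_{k+1}-t_k$ and to \eqref{eq:002} is carried out correctly and matches Lemma~\ref{NfNF}.

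There is, however, a genuine gap in your proof that $F'(x_*)^{\dagger}F(x_*)=0$. You argue that $\|F'(x_k)^{\dagger}F(x_k)\|\to 0$ and invoke continuity of the Moore--Penrose inverse ``on the constant-rank set.'' But the constant-rank conclusion of Proposition~\ref{wdns} is only available on the \emph{open} ball $B(x_0,t_*)$, while $x_*$ may sit on the boundary $\|x_*-x_0\|=t_*$. Hypothesis \eqref{posto} and lower semicontinuity of the rank only give $rank(F'(x_*))\leq rank(F'(x_0))$, and the rank can genuinely drop in the limit precisely in the critical case ${h_{\beta,0}'}(t_*)=0$, where the bound $\|F'(x_k)^{\dagger}\|\leq -\|F'(x_0)^{\dagger}\|/h_{\beta,0}'(t_k)$ blows up. When the rank drops, $A\mapsto A^{\dagger}$ is discontinuous, so $F'(x_k)^{\dagger}F(x_k)\to 0$ does not pass to the limit. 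The paper sidesteps this by never letting the pseudo-inverse vary: it fixes the single operator $F'(x_*)^{\dagger}$, writes $\|F'(x_*)^{\dagger}F(x_k)\|\leq\|F'(x_*)^{\dagger}(I-F'(x_k)F'(x_k)^{\dagger})F(x_k)\|+\|F'(x_*)^{\dagger}\|\,\|F'(x_k)\|\,\|x_{k+1}-x_k\|$, bounds the first term by $\kappa\|x_k-x_*\|$ via \eqref{rank}, and lets $k\to\infty$. A second, more minor weakness: your derivation of the Q-linear rate for $\{t_k\}$ when $\lambda=0$ by linearizing the iteration at $t_*$ degenerates to $0/0$ in the critical case $f'(t_*)=0$; the ratio $1/2$ claimed in the theorem comes from the convexity argument of Proposition~\ref{pr:2} (as in Ferreira--Svaiter), not from linearization.
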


\begin{remark}\label{lambda}
It is easily seen that the best choice of  $\lambda$ is the smallest  possible. Hence,  if  $f'(\beta)\leq 0$ then $\lambda=- \kappa f'(\beta)$ is the best choice. Moreover, since $-f'(\beta)<-f'(0)=1$ $({\bf h2})$, a possible choice for
$\lambda$ is $\kappa$, despite not being the best.
\end{remark}

\begin{remark}
If $F'(x)$ is surjective, it follows from the second equation in \eqref{121212} that $F'(x)F'(x)^{\dagger}=I_{\mathbb{R}^m}$. Thus, we can take $\lambda=0$, because $F$ satisfies \eqref{rank} with $\kappa=0$.  Therefore,
in this case, Theorem~\ref{th:knt}
extends the results obtained by Ferreira and Svaiter in Theorem 2 of \cite{FS06}.
\end{remark}

From now on, we assume that the hypotheses of Theorem \ref{th:knt} hold.
\subsection{ The  auxiliary function and sequence $\{t_k\}$ } \label{mc}
In this section, we will study the auxiliary function, $h_{ \beta, \lambda}$, which is associated with  the majorant function, $f$, and prove all results regarding only the sequence $\{t_k\}$. Remember that a function that satisfies \eqref{KH}, {\bf h1} and {\bf h2} is called a majorant function for the function F on $B(x_0,R)$. More details about the majorant condition can be found in \cite{MAX3,F08,FS04,MAX1,MAX4,MAX2,FS06}.

\begin{proposition} \label{pr:10} The following statements  hold:
\begin{itemize}
 \item[{\bf i)}]  $h_{ \beta, \lambda}(0)=\beta >0$,   $h'_{ \beta, \lambda}(0)=\lambda-1$;
  \item[{\bf  ii)}]  $h'_{ \beta, \lambda}$ is convex and strictly increasing.
   \end{itemize}
\end{proposition}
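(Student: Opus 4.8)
The plan is to prove both statements by direct computation straight from the definition \eqref{defh} of $h_{\beta,\lambda}$, invoking only the defining properties {\bf h1} and {\bf h2} of the majorant function together with the standing hypothesis $\beta>0$. Since $f$ is continuously differentiable on $[0,R)$ and $t\mapsto \beta+\lambda t$ is smooth, the function $h_{\beta,\lambda}$ is itself continuously differentiable, so its derivative exists on $[0,R)$ and is given termwise by
\begin{equation}\label{eq:hderiv}
h'_{\beta,\lambda}(t)=\lambda+f'(t),\qquad t\in[0,R).
\end{equation}
This single identity is the engine behind both parts.

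For part {\bf i)}, I would evaluate \eqref{defh} at $t=0$: using $f(0)=0$ from {\bf h1} gives $h_{\beta,\lambda}(0)=\beta+\lambda\cdot 0+f(0)=\beta$, and this is strictly positive precisely because $x_0$ was chosen with $\beta=\|F'(x_0)^{\dagger}F(x_0)\|>0$. Then I would evaluate \eqref{eq:hderiv} at $t=0$ and use $f'(0)=-1$ from {\bf h1} to obtain $h'_{\beta,\lambda}(0)=\lambda+f'(0)=\lambda-1$, which is the claimed value.

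For part {\bf ii)}, the key observation is that, by \eqref{eq:hderiv}, the function $h'_{\beta,\lambda}$ differs from $f'$ only by the additive constant $\lambda$. Adding a constant to a function changes neither its convexity nor its strict monotonicity, so convexity and strict monotonicity of $h'_{\beta,\lambda}$ follow immediately from the corresponding properties of $f'$ asserted in {\bf h2}. Concretely, for $s<t$ strict monotonicity of $f'$ gives $f'(s)<f'(t)$, hence $\lambda+f'(s)<\lambda+f'(t)$; and for the convexity inequality one simply adds $\lambda$ to both sides of the convexity inequality for $f'$.

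There is essentially no obstacle here: the proposition is a routine unwinding of the definition \eqref{defh} and of the hypotheses {\bf h1}--{\bf h2}, and its role is merely to record, for later use, that the auxiliary function $h_{\beta,\lambda}$ inherits from $f$ the structural features (positive initial value, derivative $\lambda-1$ at the origin, and a convex strictly increasing first derivative) that make the Newton-type iteration for $h_{\beta,\lambda}=0$ in \eqref{ns.KT} well behaved. The only point worth stating carefully is that $\lambda$ enters $h'_{\beta,\lambda}$ purely as an additive constant, which is exactly why these shape properties are preserved.
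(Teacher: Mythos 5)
Your proof is correct and follows exactly the route the paper intends: the paper's own proof is the one-line remark that the proposition ``follows from the definition in \eqref{defh} and assumptions {\bf h1} and {\bf h2},'' and your computation of $h_{\beta,\lambda}(0)$, of $h'_{\beta,\lambda}=\lambda+f'$, and the observation that adding the constant $\lambda$ preserves convexity and strict monotonicity is precisely the unwinding that remark leaves implicit. Nothing is missing.
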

\begin{proof}
It follows from the definition in \eqref{defh} and assumptions  {\bf h1} and {\bf h2}.
\end{proof}
\begin{proposition} \label{pr:1}
 The function $h_{ \beta, \lambda}$ has a smallest root $t_*\in (0,R) $,  is strictly convex, and
  \begin{equation} \label{eq:n.f}
    h_{ \beta, \lambda}(t)>0, \quad h_{ \beta, 0}'(t)<0, \qquad t<t-h_{ \beta, \lambda}(t)/h_{ \beta, 0}'(t)< t_*,
    \quad\qquad  \forall ~t\in [0,t_*).
  \end{equation}
  Moreover,  $h_{ \beta, 0}'(t_*)\leq 0$.
\end{proposition}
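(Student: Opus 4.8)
The plan is to establish the five assertions of the proposition in turn, with most of them flowing directly from the convexity recorded in Proposition~\ref{pr:10}. Strict convexity of $h_{\beta,\lambda}$ is immediate: since $h'_{\beta,\lambda}$ is strictly increasing (Proposition~\ref{pr:10}, item {\bf ii}), the function itself is strictly convex on $[0,R)$. To produce the smallest root I would note that $h_{\beta,\lambda}$ is continuous with $h_{\beta,\lambda}(0)=\beta>0$ (Proposition~\ref{pr:10}, item {\bf i}), while hypothesis {\bf h3} furnishes at least one zero in $(0,R)$. The zero set is closed in $[0,R)$ and bounded below, so its infimum $t_*$ is itself a zero by continuity; positivity at $0$ forces $t_*>0$, hence $t_*\in(0,R)$. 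By minimality of $t_*$ together with $h_{\beta,\lambda}(0)>0$ and the intermediate value theorem, $h_{\beta,\lambda}(t)>0$ for every $t\in[0,t_*)$, which is the first inequality in \eqref{eq:n.f}.

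Next I would pin down the sign of the derivative, which simultaneously yields the last claim $h'_{\beta,0}(t_*)\le 0$ and the second inequality in \eqref{eq:n.f}. Because $h_{\beta,\lambda}(t_*)=0$ while $h_{\beta,\lambda}(t)>0$ for $t<t_*$, the left-hand difference quotient at $t_*$ is negative, so passing to the limit gives $h'_{\beta,\lambda}(t_*)\le 0$. Since $h'_{\beta,0}=f'=h'_{\beta,\lambda}-\lambda$ and $\lambda\ge 0$, this yields $h'_{\beta,0}(t_*)\le 0$. Strict monotonicity of $f'$ (hypothesis {\bf h2}) then propagates this backward: for every $t\in[0,t_*)$ one has $h'_{\beta,0}(t)=f'(t)<f'(t_*)\le 0$, establishing $h'_{\beta,0}(t)<0$ on $[0,t_*)$.

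It remains to verify the two-sided Newton-step bound. The left inequality $t<t-h_{\beta,\lambda}(t)/h'_{\beta,0}(t)$ is immediate from the two facts just proved, namely $h_{\beta,\lambda}(t)>0$ and $h'_{\beta,0}(t)<0$, which make the correction term strictly positive. The right inequality is the only genuinely delicate point, and I expect it to be the main obstacle. Here I would invoke strict convexity in tangent-line form at $t$, evaluated at $t_*$:
\[
0=h_{\beta,\lambda}(t_*)>h_{\beta,\lambda}(t)+h'_{\beta,\lambda}(t)(t_*-t),
\]
so that $h_{\beta,\lambda}(t)<h'_{\beta,\lambda}(t)(t-t_*)$. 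Writing $h'_{\beta,\lambda}=h'_{\beta,0}+\lambda$ and using $\lambda\ge 0$ together with $t-t_*<0$ to discard the nonpositive term $\lambda(t-t_*)$, one obtains $h_{\beta,\lambda}(t)<h'_{\beta,0}(t)(t-t_*)$; dividing by the negative quantity $h'_{\beta,0}(t)$ reverses the inequality and rearranges exactly to $t-h_{\beta,\lambda}(t)/h'_{\beta,0}(t)<t_*$. The crux is the sign bookkeeping of the $\lambda$-term: the iteration uses $h'_{\beta,0}$ rather than $h'_{\beta,\lambda}$ in the denominator, while the residual $h_{\beta,\lambda}$ carries the full $\lambda$-perturbation, and it is precisely the inequality $\lambda(t-t_*)\le 0$ that lets the tangent-line estimate survive this mismatch.
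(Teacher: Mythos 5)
Your proposal is correct and follows essentially the same route as the paper: both rest on the tangent-line inequality $0=h_{\beta,\lambda}(t_*)>h_{\beta,\lambda}(t)+h'_{\beta,\lambda}(t)(t_*-t)$ from strict convexity, with the $\lambda$-term discarded via $\lambda\geq 0$ to pass from $h'_{\beta,\lambda}$ to $h'_{\beta,0}$. The only cosmetic difference is that you obtain $h'_{\beta,0}<0$ on $[0,t_*)$ by propagating $h'_{\beta,0}(t_*)\leq 0$ backward through the strict monotonicity of $f'$, whereas the paper reads the sign of $h'_{\beta,\lambda}(t)$ off the tangent-line inequality at each $t$ directly; both are valid.
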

\begin{proof}
As $h_{ \beta, \lambda}$ is continuous in $[0,R)$ and have a zero there ({\bf h3}), it must have a smallest zero $t_*$,
which is greater than $0$ because $h_{ \beta, \lambda}(0)=\beta>0.$  Since $h_{ \beta, \lambda}'$ is strictly increasing by item {\bf ii} of Proposition \ref{pr:10}, $h_{ \beta, \lambda}$ is  strictly convex.

 The first inequality in \eqref{eq:n.f} follows from the assumption
  $h_{ \beta, \lambda}(0)=\beta>0$ and the definition of $t_*$ as the smallest root of $h_{ \beta, \lambda}$.
  Since $h_{ \beta, \lambda}$ is strictly convex,
  \begin{equation}
    \label{eq:aux.pr.1}
    0=h_{ \beta, \lambda}(t_*)>h_{ \beta, \lambda}(t)+h_{ \beta, \lambda}'(t)(t_*-t),\qquad t\in [0,R), \; t\neq t_*.
  \end{equation}
  If $t\in [0,t_*)$ then $h_{ \beta, \lambda}(t)>0$ and $t_*-t>0$, which, combined with
  \eqref{eq:aux.pr.1} yields $h_{ \beta, \lambda}'(t)<0$ for all $t\in [0,t_*)$. Hence, using $\lambda\geq0$ and
$h_{ \beta, \lambda}'(t)=\lambda+h_{ \beta, 0}'(t)$ for all $t\in [0,t_*)$ the second inequality in \eqref{eq:n.f} follows.
  The third inequality in \eqref{eq:n.f} follows from the first and the
  second inequalities.

To prove the last inequality in \eqref{eq:n.f}, note that  the
  division of the inequality on \eqref{eq:aux.pr.1} by $-h_{ \beta, \lambda}'(t)$ (which
  is strictly positive), together with some simple algebraic manipulations, gives
$$t-h_{ \beta, \lambda}(t)/h_{ \beta, \lambda}'(t)< t_*,
    \quad  \forall ~t\in [0,t_*),$$
which, using the first inequality in \eqref{eq:n.f} and  $0<-h_{ \beta, \lambda}'(t)\leq- h_{ \beta, 0}'(t)$  for all $t\in [0,t_*)$,
yields the desired inequality.

Since  $h_{ \beta, \lambda}>0$ in $[0, t_*)$ and $h_{ \beta, \lambda}(t_*)=0$, we must have $ h_{ \beta, \lambda}'(t_*)\leq 0$.
Thus, the last inequality of the proposition follows from the fact that  $h_{ \beta, \lambda}'(t_*)=\lambda+h_{ \beta,0}'(t_*)$.
\end{proof}
In view of the second inequality in (\ref{eq:n.f}), the following  iteration map for $h_{ \beta, \lambda}$  is
well defined in $[0,t_*)$. Denoting this by $n_{h_{ \beta, \lambda}}$:
\begin{equation} \label{eq:n.f.2}
  \begin{array}{rcl}
  n_{h_{ \beta, \lambda}}:[0,t_*)&\to& \mathbb{R}\\
    t&\mapsto& t-h_{\beta,\lambda}(t)/h_{\beta, 0}'(t).
  \end{array}
\end{equation}
Note that in the case where $\lambda=0$, the sequence $n_{h_{ \beta, \lambda}}$ reduces to a Newton sequence,
 which  Ferreira and Svaiter used in \cite{FS06} to obtain a semi-local convergence analysis of the Newton method under a majorant condition.
\begin{proposition} \label{pr:1.5}
  For  each $ t\in [0,t^*)$ it holds that $\beta \leq n_{h_{ \beta, \lambda}}(t)<t_{*}$.
  \end{proposition}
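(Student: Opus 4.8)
The plan is to treat the two inequalities of the statement separately, since the upper bound is essentially already available and only the lower bound requires genuine work. For the upper bound $n_{h_{\beta,\lambda}}(t)<t_*$ I would simply read off the third inequality in \eqref{eq:n.f} of Proposition \ref{pr:1}, which asserts exactly that $t-h_{\beta,\lambda}(t)/h_{\beta,0}'(t)<t_*$ for every $t\in[0,t_*)$; by the definition \eqref{eq:n.f.2} this is $n_{h_{\beta,\lambda}}(t)<t_*$. For the lower bound I would first evaluate the iteration map at the left endpoint: using $h_{\beta,\lambda}(0)=\beta$ and $f'(0)=-1$ from item {\bf i} of Proposition \ref{pr:10} and hypothesis {\bf h1}, one gets
\[
n_{h_{\beta,\lambda}}(0)=0-\frac{h_{\beta,\lambda}(0)}{h_{\beta,0}'(0)}=-\frac{\beta}{f'(0)}=\beta .
\]
Hence it suffices to show that $n_{h_{\beta,\lambda}}$ is nondecreasing on $[0,t_*)$, for then $n_{h_{\beta,\lambda}}(t)\ge n_{h_{\beta,\lambda}}(0)=\beta$.

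The heart of the argument is therefore the monotonicity of $n_{h_{\beta,\lambda}}$. Writing $h_{\beta,0}'(t)=f'(t)$ and $h_{\beta,\lambda}'(t)=\lambda+f'(t)$, a direct differentiation of \eqref{eq:n.f.2} gives
\[
n_{h_{\beta,\lambda}}'(t)=-\frac{\lambda}{f'(t)}+\frac{h_{\beta,\lambda}(t)\,f''(t)}{f'(t)^2}.
\]
On $[0,t_*)$ every factor carries a favorable sign: the first two inequalities in \eqref{eq:n.f} give $f'(t)<0$ and $h_{\beta,\lambda}(t)>0$, the hypothesis supplies $\lambda\ge0$, and $f''(t)\ge0$ because $f'$ is increasing by {\bf h2}. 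Consequently $-\lambda/f'(t)\ge0$ and $h_{\beta,\lambda}(t)f''(t)/f'(t)^2\ge0$, so $n_{h_{\beta,\lambda}}'(t)\ge0$ throughout $[0,t_*)$ and $n_{h_{\beta,\lambda}}$ is nondecreasing. Combining the lower bound $n_{h_{\beta,\lambda}}(t)\ge\beta$ with the upper bound from \eqref{eq:n.f} completes the proof.

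The main obstacle is the lower bound, and within it the only delicate point is the appeal to $f''$, since the hypotheses grant only $f\in C^1$ with $f'$ convex and strictly increasing, so $f''$ need not exist at every point. I expect this to be the part demanding care, and I would resolve it by working with the map $n_{h_{\beta,\lambda}}$ itself rather than with the difference $h_{\beta,\lambda}(t)-(t-\beta)f'(t)$, whose sign is not monotone and so does not yield the bound directly. Because a convex $f'$ is locally Lipschitz, $f'$ is differentiable almost everywhere and $n_{h_{\beta,\lambda}}$ is locally absolutely continuous on $[0,t_*)$; the inequality $n_{h_{\beta,\lambda}}'(t)\ge0$ then holds almost everywhere and still forces $n_{h_{\beta,\lambda}}$ to be nondecreasing. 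Alternatively, one may approximate $f'$ from below by smooth convex increasing functions, establish monotonicity in the smooth case, and pass to the limit; either route secures $n_{h_{\beta,\lambda}}(t)\ge\beta$, while the upper bound requires nothing beyond quoting \eqref{eq:n.f}.
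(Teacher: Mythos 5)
Your proposal is correct, but it proves the lower bound by a genuinely different route from the paper. The paper never differentiates the iteration map: it uses the supporting--line inequality of the convex function $h_{\beta,\lambda}$ at $t=0$, namely $h_{\beta,\lambda}(t)\geq \beta+(\lambda-1)t$, i.e. $(1-\lambda)t-\beta\geq -h_{\beta,\lambda}(t)$, which with $\lambda\geq 0$ gives $t-\beta\geq -h_{\beta,\lambda}(t)$; substituting into the definition of $n_{h_{\beta,\lambda}}$ yields
\[
n_{h_{\beta,\lambda}}(t)-\beta\;\geq\;\frac{h_{\beta,\lambda}(t)}{-h'_{\beta,0}(t)}\bigl[h'_{\beta,0}(t)+1\bigr]\;\geq\;0,
\]
the last step using $h_{\beta,\lambda}(t)>0$, $h'_{\beta,0}(t)<0$ and $h'_{\beta,0}(t)+1\geq 0$ (since $h'_{\beta,0}(0)=-1$ and $h'_{\beta,0}$ is increasing). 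This is a one-line algebraic argument that stays entirely within the $C^1$ hypotheses. Your route instead evaluates $n_{h_{\beta,\lambda}}(0)=\beta$ and shows the map is nondecreasing via the sign of $n_{h_{\beta,\lambda}}'$; the computation and the sign analysis are right, and you correctly identify the one delicate point -- that $f''$ need only exist almost everywhere since $f'$ is merely convex and continuous -- and the fix via local Lipschitz continuity of the convex, increasing $f'$ and absolute continuity of $n_{h_{\beta,\lambda}}$ on compact subsets of $[0,t_*)$ is sound (note $f'$ increasing bounds the difference quotients below by $0$, so the Lipschitz bound holds up to the endpoint $0$ as well). What your approach buys is a slightly stronger conclusion (monotonicity of the iteration map, not just the pointwise bound $n_{h_{\beta,\lambda}}(t)\geq\beta$); what it costs is the measure-theoretic detour, which the paper's convexity argument avoids entirely. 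The upper bound is handled identically in both (it is the last inequality in \eqref{eq:n.f}, which you call the third).
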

\begin{proof}
Proposition~\ref{pr:1} implies that $h_{ \beta, \lambda}$ is convex. Hence, using  item {\bf i} of Proposition~\ref{pr:10}
 it is easy to see, by using convexity properties, that $ (1-\lambda)t-\beta \geq - h_{ \beta, \lambda}(t)$, which combined with $\lambda \geq 0$ gives $t-\beta \geq - h_{ \beta, \lambda}(t)$. Accordingly, the above definition implies that
$$
 n_{h_{ \beta, \lambda}}(t)-\beta=t- \frac{h_{ \beta, \lambda}(t)}{h'_{ \beta, 0}(t)}-\beta \geq -h_{ \beta, \lambda}(t)- \frac{h_{ \beta, \lambda}(t)}{h'_{ \beta,0}(t)}=\frac{h_{ \beta, \lambda}(t)}{-h'_{ \beta, 0}(t)}[h'_{ \beta, 0}(t)+1], \qquad  \forall ~t\in [0,t_*) .
$$
Proposition~\ref{pr:10} implies that $h'_{ \beta, 0}(0)=-1$ and   $h'_{ \beta, 0}$ is strictly increasing. Thus, we obtain  $h'_{ \beta, 0}(t)+1\geq 0$, for all $t\in [0,t_*)$.
Therefore, combining the above inequality with the first two inequalities in Proposition~\ref{pr:1}, the first inequality of proposition follows.
To prove the last inequality of proposition, combine \eqref{eq:n.f.2} with the last inequality in \eqref{eq:n.f}.
\end{proof}
\begin{proposition} \label{pr:2}
 Iteration map  $n_{h_{ \beta, \lambda}}$  maps $[0,t^*)$ in
  $[0,t^*)$, and it holds that
$$  t<n_{h_{ \beta, \lambda}}(t), \qquad   \forall\,t \in [0,t_*).$$
Moreover, if $\lambda=0$ or $\lambda=0$ and  ${h_{ \beta, 0}'}(t_*)<0$, we have the follows inequalities, respectively,
  $$    t_*-n_{h_{ \beta, \lambda}}(t)\leqslant  \frac{1}{2}(t_*-t) ,\qquad
t_*-n_{h_{ \beta, \lambda}}(t) \leq \frac{D^{-}{h_{ \beta, 0}'(t_*)}}{-2h_{ \beta, 0}'(t_*)}  (t_*-t)^2, \qquad
    \forall\,t \in [0,t_*).$$
 \end{proposition}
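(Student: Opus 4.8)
The first two assertions are almost immediate from what has already been established. By the third inequality in \eqref{eq:n.f}, for every $t\in[0,t_*)$ one has $t< t-h_{\beta,\lambda}(t)/h_{\beta,0}'(t)<t_*$; recalling the definition \eqref{eq:n.f.2} of $n_{h_{\beta,\lambda}}$, this reads $t<n_{h_{\beta,\lambda}}(t)<t_*$. Since $t\ge 0$, this simultaneously gives the strict inequality $t<n_{h_{\beta,\lambda}}(t)$ and shows $n_{h_{\beta,\lambda}}(t)\in[0,t_*)$, i.e.\ that the map sends $[0,t_*)$ into itself (alternatively, $n_{h_{\beta,\lambda}}(t)\ge\beta>0$ by Proposition~\ref{pr:1.5}). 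It remains to prove the two error estimates, for which I would set $\lambda=0$ and abbreviate $g:=h_{\beta,0}$; then $g'=f'$ is convex and strictly increasing (Proposition~\ref{pr:10}), $g'(t)<0$ on $[0,t_*)$, $g(t_*)=0$, and $g'(t_*)\le0$ (Proposition~\ref{pr:1}).

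The common starting point is a Newton-type error identity. Using $g(t_*)=0$ together with the Fundamental Theorem of Calculus to write $g(t)=-\int_t^{t_*}g'(s)\,ds$, I would substitute into $n_g(t)=t-g(t)/g'(t)$ and rearrange, obtaining
\begin{equation}\label{eq:plan-id}
t_*-n_{g}(t)=\frac{1}{-g'(t)}\int_t^{t_*}\bigl(g'(s)-g'(t)\bigr)\,ds,\qquad t\in[0,t_*).
\end{equation}
Because $g'$ is increasing, the integrand is nonnegative and $-g'(t)>0$, so \eqref{eq:plan-id} already reconfirms $n_g(t)\le t_*$; the point is to estimate the integral by exploiting the convexity of $g'$.

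For the linear estimate (case $\lambda=0$), I would bound the integrand using convexity of $g'$: writing $s=t+\theta(t_*-t)$ with $\theta\in[0,1]$ gives $g'(s)-g'(t)\le\theta\,(g'(t_*)-g'(t))$, whence $\int_t^{t_*}(g'(s)-g'(t))\,ds\le\tfrac12(g'(t_*)-g'(t))(t_*-t)$. Substituting into \eqref{eq:plan-id} yields $t_*-n_g(t)\le\frac{g'(t_*)-g'(t)}{-2g'(t)}(t_*-t)$, and since $g'(t_*)\le0$ forces $g'(t_*)-g'(t)\le-g'(t)$, the prefactor is $\le\tfrac12$, giving $t_*-n_g(t)\le\tfrac12(t_*-t)$.

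For the quadratic estimate (case $\lambda=0$ and $g'(t_*)<0$), the curvature must be captured by the left derivative $D^-g'(t_*)$, since $g'=f'$ need not be twice differentiable. The key convexity fact I would use is that, for $t\le s<t_*$, the difference quotient $\frac{g'(s)-g'(t)}{s-t}$ is dominated by the supremal left slope at $t_*$, i.e.\ $g'(s)-g'(t)\le D^-g'(t_*)(s-t)$; integrating gives $\int_t^{t_*}(g'(s)-g'(t))\,ds\le\tfrac12 D^-g'(t_*)(t_*-t)^2$. Combining with \eqref{eq:plan-id} and the bound $-g'(t)\ge-g'(t_*)>0$ (monotonicity of $g'$) produces the claimed $t_*-n_g(t)\le\frac{D^-g'(t_*)}{-2g'(t_*)}(t_*-t)^2$. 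I expect the main obstacle to be precisely this step: justifying the slope inequality $g'(s)-g'(t)\le D^-g'(t_*)(s-t)$ rigorously from convexity and the monotonicity of difference quotients (rather than from a nonexistent second derivative), and correctly handling the one-sided derivative $D^-g'(t_*)$ at the endpoint.
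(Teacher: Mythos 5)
Your proof is correct and follows essentially the same route as the paper: the first two claims are read off from the last inequality in \eqref{eq:n.f} together with the definition \eqref{eq:n.f.2}, and for the rate estimates the paper simply invokes Proposition 4 of \cite{FS06} (with $h_{\beta,0}=f$), whose argument is precisely the Newton error identity plus convexity of $f'$ that you write out. Your version is self-contained where the paper's is a citation, and every step checks out, including the use of monotone difference quotients of the convex function $h_{\beta,0}'$ to justify $h_{\beta,0}'(s)-h_{\beta,0}'(t)\le D^{-}h_{\beta,0}'(t_*)\,(s-t)$ and the passage from $-h_{\beta,0}'(t)$ to $-h_{\beta,0}'(t_*)$ in the denominator.
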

\begin{proof}
The first two statements of the proposition follow trivially for the last inequalities in  \eqref{eq:n.f} and \eqref{eq:n.f.2}.
Now, if $\lambda=0$, then the sequence in \eqref{eq:n.f.2} reduces to a Newton sequence. Hence, the second part of the proof  follows the same pattern as the proof of Proposition 4 of \cite{FS06}
with $h_{ \beta, 0}=f$.
\end{proof}
The definition of $\{t_{k}\}$ in Theorem~\ref{th:knt} is equivalent to the following one
\begin{equation}
  \label{eq:tknk}
  t_{0}=0,\quad t_{k+1}=n_{h_{ \beta, \lambda}}(t_{k}), \qquad k=0,1,\ldots\, .
\end{equation}
Therefore, using also Proposition~\ref{pr:2} it is easy to prove  that
\begin{corollary} \label{cr:kanttk}
  The sequence $\{t_{k}\}$ is well defined, is strictly increasing,
  is contained in $[0,t_*)$, and converges to $t_*$.

Moreover, if $\lambda=0$ or $\lambda=0$ and  ${h_{ \beta, 0}'}(t_*)<0$, the sequence $\{t_{k}\}$ converges Q-linearly or Q-quadratically
to $t_*$, respectively,  as follows
$$
 t_*-t_{k+1}\leq \frac{1}{2}( t_*-t_{k}), \qquad  t_*-t_{k+1} \leq \frac{D^{-}{h_{ \beta, 0}'(t_*)}}{-2 h_{ \beta, 0}'(t_*)}(t_*-t_{k})^2,
    \quad   k=0,1,\,\ldots\,.
$$
\end{corollary}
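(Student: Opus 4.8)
The plan is to derive the entire corollary from the recursive characterization \eqref{eq:tknk}, namely $t_0=0$ and $t_{k+1}=n_{h_{\beta,\lambda}}(t_k)$, combined with the properties of the iteration map collected in Proposition~\ref{pr:2}. In effect, the corollary is just the specialization of that proposition along the orbit $\{t_k\}$ of $n_{h_{\beta,\lambda}}$.

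First I would prove well-definedness and the containment $\{t_k\}\subset[0,t_*)$ simultaneously, by induction on $k$. The base case is $t_0=0\in[0,t_*)$. For the inductive step, Proposition~\ref{pr:2} states that $n_{h_{\beta,\lambda}}$ maps $[0,t_*)$ into $[0,t_*)$; hence if $t_k\in[0,t_*)$ then $t_{k+1}=n_{h_{\beta,\lambda}}(t_k)$ is well defined and again lies in $[0,t_*)$. Strict monotonicity comes from the same proposition, which gives $t<n_{h_{\beta,\lambda}}(t)$ for every $t\in[0,t_*)$; evaluating at $t=t_k$ yields $t_k<t_{k+1}$.

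Being strictly increasing and bounded above by $t_*$, the sequence $\{t_k\}$ converges to some $\bar t\in[0,t_*]$. The only step needing an argument beyond a direct appeal to Proposition~\ref{pr:2} is identifying $\bar t$ with $t_*$, and I would do this by contradiction. Suppose $\bar t<t_*$. Since $h_{\beta,\lambda}$ and $h'_{\beta,0}$ are continuous and the second inequality in \eqref{eq:n.f} gives $h'_{\beta,0}(\bar t)<0$, the map $n_{h_{\beta,\lambda}}$ is continuous at $\bar t$; passing to the limit in $t_{k+1}=t_k-h_{\beta,\lambda}(t_k)/h'_{\beta,0}(t_k)$ forces $h_{\beta,\lambda}(\bar t)=0$. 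This contradicts the first inequality in \eqref{eq:n.f}, namely $h_{\beta,\lambda}>0$ on $[0,t_*)$, so $\bar t=t_*$.

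The convergence rates then follow by pure substitution, and I expect no real obstacle here since all the analytic work was already carried out in Proposition~\ref{pr:2}. When $\lambda=0$, applying its first estimate at $t=t_k$ gives $t_*-t_{k+1}=t_*-n_{h_{\beta,\lambda}}(t_k)\leq\frac{1}{2}(t_*-t_k)$, the Q-linear bound. Under the additional hypothesis $h'_{\beta,0}(t_*)<0$, applying its second estimate at $t=t_k$ yields the Q-quadratic bound $t_*-t_{k+1}\leq\frac{D^-h'_{\beta,0}(t_*)}{-2h'_{\beta,0}(t_*)}(t_*-t_k)^2$, completing the proof.
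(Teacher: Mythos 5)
Your proposal is correct and follows the same route the paper intends: the paper presents this corollary as an immediate consequence of the recursion \eqref{eq:tknk} together with Proposition~\ref{pr:2}, which is exactly what you carry out (with the limit-identification argument for $\bar t = t_*$ spelled out more explicitly than the paper bothers to).
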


Hence, all statements involving only $\{t_{k}\}$ on Theorem~\ref{th:knt} are valid

\subsection{Convergence} \label{sec:MFNLO}
In this section we will prove well definedness and convergence of the sequence  $\{x_{k}\}$ specified
 on \eqref{ns.KT} in Theorem~\ref{th:knt}.

We start with two lemma that highlight the  relationships between the majorant function $f$ and the non-linear function $F$.
\begin{proposition} \label{wdns}
If \,$\| x-x_0\|\leq t <t_*$, then
$rank(F'(x))=rank(F'(x_0))\geq 1$  and
$$
\left\|F'(x)^{\dagger}\right\|\leq {-\| F'(x_0)^{\dagger}\|}/{h_{\beta, 0}'(t)}.  $$
In particular, $rank(F'(x))=rank(F'(x_0))$ in $B(x_0, t_*)$.
\end{proposition}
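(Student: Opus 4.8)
The plan is to prove Proposition~\ref{wdns} by invoking the perturbation lemma for the Moore-Penrose inverse (Lemma~\ref{lem:ban}) applied to the pair of operators $A = F'(x)$ and $B = F'(x_0)$. To use that lemma I need two ingredients: a rank comparison and a norm bound on the difference $\|F'(x) - F'(x_0)\|$ that keeps $\|F'(x_0)^\dagger\|\,\|F'(x) - F'(x_0)\| < 1$. The rank hypothesis \eqref{posto} gives $rank(F'(x)) \leq rank(F'(x_0))$; I must also check $rank(F'(x)) \geq 1$, which I expect to follow once the perturbation bound forces the ranks to be equal (and $rank(F'(x_0)) \geq 1$ since $F'(x_0) \neq 0$). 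So the logical order is: first establish the product bound, then apply Lemma~\ref{lem:ban} with the roles $A = F'(x)$, $B = F'(x_0)$ to conclude simultaneously that the ranks agree and that the norm estimate holds.

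The key estimate is to control $\|F'(x_0)^\dagger\|\,\|F'(x) - F'(x_0)\|$ using the majorant condition \eqref{KH}. Taking $y = x$ and the base point $x_0$ in \eqref{KH} (more precisely, applying it along the segment from $x_0$ to $x$), I get
\begin{equation*}
\|F'(x_0)^\dagger\|\,\|F'(x) - F'(x_0)\| \leq f'(\|x - x_0\|) - f'(0) = f'(\|x-x_0\|) + 1,
\end{equation*}
where I used {\bf h1} ($f'(0) = -1$). Since $\|x - x_0\| \leq t < t_*$ and $f'$ is strictly increasing ({\bf h2}), monotonicity gives $f'(\|x - x_0\|) + 1 \leq f'(t) + 1$. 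Now I need $f'(t) + 1 < 1$, i.e. $f'(t) < 0$; but more usefully, recalling $h_{\beta,0}'(t) = \lambda + h_{\beta,\lambda}'(t) - \lambda$, and in fact $h_{\beta,0}'(t) = 1\cdot 0 + f'(t)$ wait — from \eqref{defh}, $h_{\beta,0}'(t) = f'(t)$, so $f'(t) + 1 = h_{\beta,0}'(t) + 1$. By Proposition~\ref{pr:1.5}'s supporting computation (or directly from $h_{\beta,0}'(0) = -1$ and monotonicity together with $h_{\beta,0}'(t) < 0$ on $[0,t_*)$ via the second inequality in \eqref{eq:n.f}), I have $-1 \leq h_{\beta,0}'(t) < 0$, hence $0 \leq h_{\beta,0}'(t) + 1 < 1$. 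Thus the product is strictly less than $1$, as required.

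With the bound in hand, Lemma~\ref{lem:ban} yields $rank(F'(x)) = rank(F'(x_0)) \geq 1$ and
\begin{equation*}
\|F'(x)^\dagger\| \leq \frac{\|F'(x_0)^\dagger\|}{1 - \|F'(x_0)^\dagger\|\,\|F'(x) - F'(x_0)\|} \leq \frac{\|F'(x_0)^\dagger\|}{1 - (h_{\beta,0}'(t) + 1)} = \frac{-\|F'(x_0)^\dagger\|}{h_{\beta,0}'(t)},
\end{equation*}
which is exactly the claimed estimate (the denominator is positive since $h_{\beta,0}'(t) < 0$). Finally, since every $x \in B(x_0, t_*)$ satisfies $\|x - x_0\| < t_*$, one can choose $t$ with $\|x - x_0\| \leq t < t_*$, so the rank equality holds throughout $B(x_0, t_*)$, giving the last sentence.

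The main obstacle I anticipate is the careful handling of the role-reversal in Lemma~\ref{lem:ban}: the lemma bounds $\|A^\dagger\|$ in terms of $\|B^\dagger\|$ under the hypothesis $\|A^\dagger\|\,\|A - B\| < 1$ and $rank(B) \leq rank(A)$, whereas what I control directly is $\|B^\dagger\|\,\|A - B\| = \|F'(x_0)^\dagger\|\,\|F'(x) - F'(x_0)\|$ and the opposite rank inequality $rank(A) \leq rank(B)$ from \eqref{posto}. So I must apply the lemma with $A = F'(x_0)$ and $B = F'(x)$ — that is, with the \emph{iterate} playing the role of the perturbed operator $B$ — so that its hypotheses $rank(B) = rank(F'(x)) \leq rank(F'(x_0)) = rank(A)$ and $\|A^\dagger\|\,\|A-B\| = \|F'(x_0)^\dagger\|\,\|F'(x_0) - F'(x)\| < 1$ are both met; the conclusion then gives $rank(F'(x)) = rank(F'(x_0))$ and the bound $\|F'(x_0)^\dagger\| \leq \|F'(x)^\dagger\|/(1 - \|F'(x)^\dagger\|\,\cdots)$, which is the \emph{wrong} direction. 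To get the stated inequality on $\|F'(x)^\dagger\|$ I instead need the lemma applied the other way, which requires already knowing the ranks are equal; thus the clean route is to first use \eqref{posto} plus the perturbation bound to deduce equality of ranks, and then re-apply the lemma (now symmetric in rank) with $A = F'(x)$, $B = F'(x_0)$ to extract the norm bound. Threading this role assignment correctly, and confirming $rank(F'(x)) \geq 1$, is the only genuinely delicate point.
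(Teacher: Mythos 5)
Your paragraphs two and three are exactly the paper's proof: take $x\in B[x_0,t]$, use \eqref{KH} with base point $x_0$, {\bf h1}, {\bf h2} and the second inequality in \eqref{eq:n.f} to get $\|F'(x_0)^{\dagger}\|\,\|F'(x)-F'(x_0)\|\le f'(t)+1=h_{\beta,0}'(t)+1<1$, then combine with \eqref{posto} and Lemma~\ref{lem:ban} to obtain the rank equality and $\|F'(x)^{\dagger}\|\le -\|F'(x_0)^{\dagger}\|/h_{\beta,0}'(t)$. That is correct and complete. The trouble you raise in your last paragraph is an artifact of a typo in the printed statement of Lemma~\ref{lem:ban}: as written, the hypothesis $\|A^{\dagger}\|\|A-B\|<1$ is inconsistent with the conclusion $\|A^{\dagger}\|\le\|B^{\dagger}\|/(1-\|B^{\dagger}\|\|A-B\|)$ under any single role assignment. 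The Lawson--Hanson result the paper intends reads: if $1\le rank(A)\le rank(B)$ and $\|B^{\dagger}\|\,\|A-B\|<1$, then $rank(A)=rank(B)$ and $\|A^{\dagger}\|\le \|B^{\dagger}\|/(1-\|B^{\dagger}\|\|A-B\|)$ --- i.e.\ the \emph{reference} operator's pseudo-inverse appears in the hypothesis product and the \emph{perturbed} operator's pseudo-inverse is bounded in the conclusion. Read that way, a single application with $B=F'(x_0)$ and $A=F'(x)$ delivers both conclusions at once, which is what the paper does. Your proposed two-step repair, by contrast, does not close the gap it is meant to close: after deducing rank equality, re-applying the printed lemma with $A=F'(x)$, $B=F'(x_0)$ would require $\|F'(x)^{\dagger}\|\,\|F'(x)-F'(x_0)\|<1$, i.e.\ an a priori bound on $\|F'(x)^{\dagger}\|$, which is precisely the quantity being estimated --- the argument would be circular. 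Keep the one-shot application from your third paragraph and discard the two-step detour; the rest is fine.
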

\begin{proof}
 Take $x\in B[x_0,t]$, $0\leq t<t_*$.  Using the assumptions
  \eqref{KH}, {\bf h1}, {\bf h2},  $f'(t)=h_{\beta,0}'(t)$ and the
  second inequality in \eqref{eq:n.f} we   obtain
$$
 \|F'(x_0)^{\dagger}\|\|F'(x)-F'(x_0)\|
    \leqslant  f'(\|x-x_0\|)-f'(0)\leqslant
     f'(t)+1=h_{ \beta,0}'(t)+1<1.
$$
  Combining the last inequality with  \eqref{posto} and  Lemma~\ref{lem:ban}, we conclude that
  $rank(F'(x))=rank(F'(x_0))\geq 1$  and
  \begin{align*}
    \|F'(x)^{\dagger}\| &\leqslant
    \frac{\| F'(x_0)^{\dagger}\|}{1-\left(f'(t)+1\right)}= \frac{\| F'(x_0)^{\dagger}\|}{-f'(t)}=-\frac{\| F'(x_0)^{\dagger}\|}{h_{\beta,0}'(t)}.
  \end{align*}
\end{proof}
It is convenient to study the linearization error of $F$ at point in~$\Omega$. For that purpose we define
\begin{equation}\label{eq:def.er}
  E_F(x,y):= F(y)-\left[ F(x)+F'(x)(y-x)\right],\qquad y,\, x\in \Omega.
\end{equation}
We will bound this error by the error in the linearization on the
majorant function $f$
\begin{equation}\label{eq:def.erf}
        e_f(t,u):= f(u)-\left[ f(t)+f'(t)(u-t)\right],\qquad t,\,u \in [0,R).
\end{equation}

\begin{lemma}  \label{pr:taylor}
Take
$$
   x,y\in B(x_0,R) \quad\mbox{and}\quad 0\leq t<v<R.
$$
If $\|x-x_0\|\leqslant t$ and $\|y-x\|\leqslant v-t$, then
\[
\|F'(x_0)^{\dagger}\|\|E_F(x,y)\|\leqslant e_f(t, v)\frac{\|y-x\|^2}{(v-t)^2}.
\]
\end{lemma}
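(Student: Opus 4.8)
The plan is to represent the linearization error $E_F(x,y)$ from \eqref{eq:def.er} as an integral of $F'$ along the segment $[x,y]$, bound the integrand by the majorant condition \eqref{KH}, and then use the convexity of $f'$ to pass from the genuine radii to the majorizing scalars $t$ and $v-t$; integrating the resulting scalar bound will reproduce exactly $e_f(t,v)$ from \eqref{eq:def.erf}. First I would invoke the fundamental theorem of calculus, which is legitimate because the whole segment lies in $B(x_0,R)\subseteq\Omega$ (indeed $\|x+\tau(y-x)-x_0\|\leq\|x-x_0\|+\tau\|y-x\|\leq t+(v-t)=v<R$ for $\tau\in[0,1]$), to write
\[
E_F(x,y)=\int_0^1\bigl[F'(x+\tau(y-x))-F'(x)\bigr](y-x)\,d\tau.
\]
Taking norms and multiplying through by $\|F'(x_0)^{\dagger}\|$ gives
\[
\|F'(x_0)^{\dagger}\|\,\|E_F(x,y)\|\leq\|y-x\|\int_0^1\|F'(x_0)^{\dagger}\|\,\bigl\|F'(x+\tau(y-x))-F'(x)\bigr\|\,d\tau,
\]
and applying \eqref{KH} to the pair $x$ and $x+\tau(y-x)$ (admissible since $\|x-x_0\|+\tau\|y-x\|\leq v<R$) bounds the integrand by $f'(\|x-x_0\|+\tau\|y-x\|)-f'(\|x-x_0\|)$.

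The hard part, and the step I expect to be the main obstacle, is to replace the true quantities $\|x-x_0\|\leq t$ and $\|y-x\|\leq v-t$ by $t$ and $v-t$ while keeping the bound sharp enough to recover $e_f(t,v)$. Setting $a:=\|x-x_0\|$, $\delta:=\|y-x\|$ and $r:=\delta/(v-t)\in[0,1]$, I would exploit the convexity of $f'$ (hypothesis {\bf h2}) in two ways. First, convexity along the combination $a+\tau\delta=(1-r)a+r\bigl(a+\tau(v-t)\bigr)$ yields $f'(a+\tau\delta)-f'(a)\leq r\bigl[f'(a+\tau(v-t))-f'(a)\bigr]$. Second, since for a convex $f'$ the increment $w\mapsto f'(w+\tau(v-t))-f'(w)$ is nondecreasing and $a\leq t$, one has $f'(a+\tau(v-t))-f'(a)\leq f'(t+\tau(v-t))-f'(t)$. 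Chaining these two estimates dominates the integrand by $\dfrac{\|y-x\|}{v-t}\bigl[f'(t+\tau(v-t))-f'(t)\bigr]$.

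Finally I would substitute this bound back and pull out the constant factors, obtaining $\|F'(x_0)^{\dagger}\|\,\|E_F(x,y)\|\leq\dfrac{\|y-x\|^2}{v-t}\int_0^1\bigl[f'(t+\tau(v-t))-f'(t)\bigr]\,d\tau$. Recognizing, again by the fundamental theorem of calculus, that $\int_0^1 f'(t+\tau(v-t))\,d\tau=\bigl(f(v)-f(t)\bigr)/(v-t)$, the integral equals $e_f(t,v)/(v-t)$, so the factors assemble into $e_f(t,v)\,\|y-x\|^2/(v-t)^2$, which is the claimed estimate. The only point requiring a word of care is the degenerate situation $\|y-x\|=0$, in which $E_F(x,y)=0$ and the inequality holds trivially; note that $v-t>0$ is guaranteed by the hypothesis $t<v$, so no division-by-zero issue arises.
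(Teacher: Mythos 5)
Your argument is correct and is essentially the paper's own proof: the paper simply defers to Lemma~7 of \cite{FS06}, whose proof is exactly this integral representation of $E_F(x,y)$, the application of \eqref{KH} along the segment, and the two convexity estimates on $f'$ that let one pass from $\|x-x_0\|,\|y-x\|$ to $t,v-t$ and reassemble $e_f(t,v)$. No gaps; the domain checks ($a+\tau(v-t)\le v<R$) and the degenerate case $y=x$ are handled properly.
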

\begin{proof}
The proof follows the same pattern as the proof of Lemma~7  of \cite{FS06}.
\end{proof}

Proposition \ref{wdns} guarantees, in particular,  that  $rank(F'(x))\geq 1$ for all $x \in B(x_0,t_*)$
 and, consequently, the Gauss-Newton
iteration map is well-defined.  Let us call $G_{F}$, the
Gauss-Newton iteration map for $F$ in that region:
\begin{equation} \label{NF}
  \begin{array}{rcl}
  G_{F}:B(x_0, t_*) &\to& \mathbb{R}^n\\
    x&\mapsto& x- F'(x)^{\dagger}F(x).
  \end{array}
\end{equation}
One can apply a \emph{single} Gauss-Newton iteration on any $x\in
B(x_0,t_*)$ to obtain $G_{F}(x)$ which may not belong to $B(x_0,
t_*)$, or even may not belong to the domain of $F$. Therefore, this is
enough to guarantee well definedness of only one iteration. To
ensure that Gauss-Newton iterations may be repeated indefinitely,
we need the following result.

First, we define some subsets of $B(x_0, t_*)$ in which, as we shall
prove, the desired inclusion holds   for all points in these subsets.  
\begin{align}\label{E:K}
K(t)&:=\left\{ x\in\Omega\, : \;  \|x-x_0\|\leq t, ~\|F'(x)^{\dagger}F(x)\| \leqslant -\frac{h_{\beta,\lambda}(t)}{h_{\beta,0}'(t)}\right\},\qquad
   t\in [0,t_*)\,,\\
  \label{eq:def.K}
 K&:=\bigcup_{t\in[0,t_*)} K(t).
\end{align}

In \eqref{E:K}, $0\leqslant t<t_*$, therefore, $h_{\beta,0}'(t)\neq 0$ and
$rank(F'(x))\geq 1$  in $B[x_0,t]\subset B[x_0,t_*)$  (Proposition \ref{wdns}).
Hence, the definitions are consistent.

\begin{lemma} \label{NfNF}
For each $t\in [0, t_*)$, it holds that:
\begin{itemize}
 \item[{\bf i)}]$K(t)\subset B(x_0,t_*)$;
  \item[{\bf  ii)}] $
 \|G_F(G_F(x))-G_F(x)\| \leq -\frac{h_{\beta,\lambda}(n_{h_{\beta,\lambda}}(t))}{{h'_{\beta,0}}(n_{h_{\beta,\lambda}}(t))}\left( \frac{\|G_F(x)-x\|}{n_{h_{\beta,\lambda}}(t)-t}\right)^2, \quad \forall \, x \in K(t),
 $
  \item[{\bf  iii)}]  $ G_F\left( K(t) \right)\subset K\left(n_{h_{ \beta, \lambda}}(t) \right).$
   \end{itemize}
As a consequence, $K\subset B(x_0,t_*)$ and $G_F(K)\subset K.$\\
\end{lemma}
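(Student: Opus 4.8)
The plan is to dispatch (i) at once, reduce the two closing inclusions to (i) and (iii), derive (iii) from the single-step estimate (ii), and concentrate the real work on (ii). Item (i) is immediate: if $x\in K(t)$ then $\|x-x_0\|\le t<t_*$, so $x\in B(x_0,t_*)$. For the final sentence, $K=\bigcup_{t}K(t)\subset B(x_0,t_*)$ follows directly from (i), and $G_F(K)\subset K$ follows from (iii), since any $x\in K$ lies in some $K(t)$ and then $G_F(x)\in K\big(n_{h_{\beta,\lambda}}(t)\big)\subset K$.

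For (ii) I would fix $x\in K(t)$, set $y:=G_F(x)$, and note $G_F(G_F(x))-G_F(x)=-F'(y)^{\dagger}F(y)$, while the defining inequality of $K(t)$ gives $\|y-x\|=\|F'(x)^{\dagger}F(x)\|\le -h_{\beta,\lambda}(t)/h_{\beta,0}'(t)=n_{h_{\beta,\lambda}}(t)-t$. The first idea is to expand $F(y)$ around $x$ via the linearization error \eqref{eq:def.er}: since $y-x=-F'(x)^{\dagger}F(x)$, one has $F(x)+F'(x)(y-x)=\big(I_{\mathbb{R}^m}-F'(x)F'(x)^{\dagger}\big)F(x)$, hence
\[
F'(y)^{\dagger}F(y)=F'(y)^{\dagger}\big(I_{\mathbb{R}^m}-F'(x)F'(x)^{\dagger}\big)F(x)+F'(y)^{\dagger}E_F(x,y).
\]
I would bound the two summands separately. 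The first is controlled directly by the structural hypothesis \eqref{rank}, yielding $\le \kappa\|x-y\|$. For the second, I would first check $\|y-x_0\|\le\|y-x\|+\|x-x_0\|\le n_{h_{\beta,\lambda}}(t)<t_*$ (using Proposition~\ref{pr:2}), so that Proposition~\ref{wdns} applies at $y$ and gives $\|F'(y)^{\dagger}\|\le -\|F'(x_0)^{\dagger}\|/h_{\beta,0}'\big(n_{h_{\beta,\lambda}}(t)\big)$, and then bound $\|E_F(x,y)\|$ through Lemma~\ref{pr:taylor} with $v=n_{h_{\beta,\lambda}}(t)$, which is legitimate precisely because $\|y-x\|\le v-t$.

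Combining these reduces (ii) to a scalar inequality, and the pivotal computation is the identity $e_f\big(t,n_{h_{\beta,\lambda}}(t)\big)=h_{\beta,\lambda}\big(n_{h_{\beta,\lambda}}(t)\big)-\lambda\big(n_{h_{\beta,\lambda}}(t)-t\big)$, which follows from the defining relation $h_{\beta,0}'(t)\big(n_{h_{\beta,\lambda}}(t)-t\big)=-h_{\beta,\lambda}(t)$ together with $h_{\beta,\lambda}(s)=h_{\beta,0}(s)+\lambda s$ from \eqref{defh}. This rewrites the error contribution in terms of $h_{\beta,\lambda}$ evaluated at $n_{h_{\beta,\lambda}}(t)$, producing exactly the majorant factor appearing on the right of (ii) plus a correction proportional to $\lambda$. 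The hard part will be absorbing the \emph{linear} residual term $\kappa\|x-y\|$ into the \emph{quadratic} majorant bound: here I would invoke $\|y-x\|\le n_{h_{\beta,\lambda}}(t)-t$, the monotonicity of $f'=h_{\beta,0}'$ combined with $n_{h_{\beta,\lambda}}(t)\ge\beta$ (Proposition~\ref{pr:1.5}), and the standing calibration $\lambda\ge -\kappa f'(\beta)$ recorded in Remark~\ref{lambda}, which forces $\kappa\,\big|h_{\beta,0}'\big(n_{h_{\beta,\lambda}}(t)\big)\big|\le\lambda$. This is the one place where the $\lambda t$ correction in \eqref{defh} earns its keep and the only point at which the constant $\kappa$ enters quantitatively; I expect this comparison to be the genuine crux of the whole lemma.

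Finally, (iii) would follow from (ii) with only bookkeeping. The ball membership $G_F(x)\in B\big(x_0,n_{h_{\beta,\lambda}}(t)\big)$ is the triangle-inequality estimate $\|G_F(x)-x_0\|\le\|y-x\|+\|x-x_0\|\le n_{h_{\beta,\lambda}}(t)$ already used above, and the defining inequality of $K\big(n_{h_{\beta,\lambda}}(t)\big)$ for $G_F(x)$ is just (ii) after discarding the factor $\big(\|G_F(x)-x\|/(n_{h_{\beta,\lambda}}(t)-t)\big)^2\le 1$, since $\|F'(G_F(x))^{\dagger}F(G_F(x))\|=\|G_F(G_F(x))-G_F(x)\|$. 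Thus the entire argument rests on the scalar estimate in (ii), with items (i), (iii) and the closing inclusions reducing to the triangle inequality and the monotonicity and positivity facts for $h_{\beta,\lambda}$, $h_{\beta,0}'$, and $n_{h_{\beta,\lambda}}$ established in Propositions~\ref{pr:1}–\ref{pr:2}.
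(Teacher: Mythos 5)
Your route is the paper's route: the same splitting $F(G_F(x)) = E_F(x,G_F(x)) + (I_{\mathbb{R}^m}-F'(x)F'(x)^{\dagger})F(x)$, the same three estimates (\eqref{rank} for the second summand, Proposition~\ref{wdns} for $\|F'(G_F(x))^{\dagger}\|$, Lemma~\ref{pr:taylor} for the linearization error), the same identity $e_f(t,n_{h_{\beta,\lambda}}(t)) = h_{\beta,\lambda}(n_{h_{\beta,\lambda}}(t)) - \lambda(n_{h_{\beta,\lambda}}(t)-t)$, and the same calibration $\kappa\,(-h'_{\beta,0}(n_{h_{\beta,\lambda}}(t)))\le\lambda$ via Proposition~\ref{pr:1.5} and {\bf h2}. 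Item (i), the two closing inclusions, and the bookkeeping that would deduce (iii) from (ii) are all correct.

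The gap is exactly where you predict it: the absorption of $\kappa\|G_F(x)-x\|$ does not go through with the ingredients you list. Write $n:=n_{h_{\beta,\lambda}}(t)$ and $\theta:=\|G_F(x)-x\|/(n-t)\in[0,1]$. After your reductions the residual beyond the claimed bound is $(n-t)\,\theta\big[\kappa+\lambda\theta/h'_{\beta,0}(n)\big]$, and the calibration only gives $\kappa+\lambda\theta/h'_{\beta,0}(n)\le\frac{\lambda}{-h'_{\beta,0}(n)}(1-\theta)\ge 0$: a term linear in $\theta$ cannot be dominated by one quadratic in $\theta$ as $\theta\to 0$. The comparison closes only at $\theta=1$, i.e.\ it yields $\|F'(G_F(x))^{\dagger}F(G_F(x))\|\le -h_{\beta,\lambda}(n)/h'_{\beta,0}(n)$, which is all that (iii) needs --- so you should prove (iii) directly from that weaker estimate rather than via (ii) --- but it does not yield the quadratic form in (ii) unless $\kappa=0$ (hence $\lambda=0$), in which case the linear term is absent and (ii) follows cleanly. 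Note that the paper's own write-up conceals the same difficulty: its displayed step $e_f(t,n)\theta^2\le h_{\beta,\lambda}(n)\theta^2+\lambda h_{\beta,\lambda}(t)/h'_{\beta,0}(t)$ amounts to $a\theta^2\le a$ with $a=\lambda h_{\beta,\lambda}(t)/h'_{\beta,0}(t)\le 0$, which reverses whenever $a<0$ and $\theta<1$. So for $\kappa>0$ item (ii) requires either a genuinely different argument or a weakened statement; your sketch, like the paper's proof, does not supply one.
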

\begin{proof}
Item {\bf i} follows trivially from the definition of $K(t)$.

Take $t\in[0,t_*)$, $x\in K(t)$.  Using definition \eqref{E:K} and the
first two statements in Proposition~\ref{pr:2} we have
\begin{equation}  \label{eq:eq.aux.k}
   \| x-x_0\|\leq t,\qquad \|F'(x)^{\dagger}F(x)\| \leq
-h_{\beta,\lambda}(t)/h_{\beta,0}'(t),\quad t<n_{h_{ \beta, \lambda}}(t)<t_*.
\end{equation}
Therefore
\begin{align*}
\|G_F(x)-x_0\|&\leqslant \|x-x_0\|+\| G_F(x)-x\|= \|x-x_0\|+\|F'(x)^{\dagger}F(x)\|\\
              &\leqslant t-h_{\beta,\lambda}(t)/h_{\beta,0}'(t)=n_{h_{ \beta, \lambda}}(t)<t_*\,,
\end{align*}
and
\begin{equation} \label{eq:auxNfNF}
 G_F(x)\in B[x_0,n_{h_{ \beta, \lambda}}(t)]\subset B(x_0, t_*).
\end{equation}
Since $G_F(x)$, $n_{h_{ \beta, \lambda}}(t)$ belong to the domains of $F$ and $f$, respectively,
using the definitions in \eqref{eq:n.f.2} and \eqref{NF},  $h_{ \beta,\lambda}(t)= \beta+\lambda t + f(t)$, linearization errors \eqref{eq:def.er} and \eqref{eq:def.erf} and some algebraic manipulation, we obtain
\begin{align}\label{rqs}
h_{ \beta, \lambda}(n_{h_{ \beta, \lambda}}(t))
&=h_{ \beta, \lambda}(n_{h_{ \beta, \lambda}}(t))-\left[h_{ \beta, \lambda}(t)+h_{ \beta,0}'(t)(n_{h_{ \beta, \lambda}}(t)-t)\right]\nonumber\\
          &=e_f(t,n_{h_{ \beta, \lambda}}(t))-\lambda h_{ \beta, \lambda}(t)/h_{ \beta,0}'(t)
\end{align}
and
\begin{align*}
        F(G_F(x))
        &=F(G_F(x))-\left[F(x)+F'(x)(G_F(x)-x)\right]+(I_{\mathbb{R}^m}-F'(x)F'(x)^{\dagger})F(x)\\
        &=E_F(x,G_F(x))+(I_{\mathbb{R}^m}-F'(x)F'(x)^{\dagger})F(x).
\end{align*}
The last equation, together with simple algebraic manipulations, implies that
\begin{multline*}
\|F'(G_F(x))^{\dagger}F(G_F(x))\|\leq \|F'(G_F(x))^{\dagger}\|\|E_F(x,G_F(x))\| \\+\|F'(G_F(x))^{\dagger}(I_{\mathbb{R}^m}-F'(x)F'(x)^{\dagger})F(x)\|.
\end{multline*}
As $\|G_F(x)-x_0\|\leq n_{h_{ \beta, \lambda}}(t)$, it follows from Proposition \ref{wdns} that $rank (F'(G_F(x)))\geq 1$  and
\[ \|F'(G_F(x))^{\dagger}\|\leq -\|F'(x_0)^{\dagger}\|/h_{ \beta, 0}'(n_{h_{ \beta, \lambda}}(t)).\]
From the two latter equations and \eqref{rank} we have
$$\|F'(G_F(x))^{\dagger}F(G_F(x))\|\leq -\frac{\|F'(x_0)^{\dagger}\|}{h_{ \beta, 0}'(n_{h_{ \beta,\lambda}}(t))}\|E(x,G_F(x))\| +\kappa\|G_F(x)-x\|.$$
On the other hand, using \eqref{eq:eq.aux.k}, Lemma \ref{pr:taylor} and \eqref{rqs} we have
\begin{align*}
\| F'(x_0)^{\dagger}\|\|E_F(x,G_F(x)) \|& \leq e_f(t,n_{h_{ \beta, \lambda}}(t))\left( \frac{\|G_F(x)-x\|}{n_{h_{ \beta,\lambda}}(t)-t}\right)^2\\
&\leq h_{ \beta, \lambda}(n_{h_{ \beta, \lambda}}(t))\left( \frac{\|G_F(x)-x\|}{n_{h_{ \beta,\lambda}}(t)-t}\right)^2 +\lambda h_{ \beta, \lambda}(t)/h_{ \beta,0}'(t).
\end{align*}
Thus, the last two equations, together with the second equation in \eqref{eq:eq.aux.k}, imply
\begin{align*}
\|F'(G_F(x))^{\dagger}F(G_F(x))\|&\leq \frac{-h_{ \beta, \lambda}(n_{h_{ \beta, \lambda}}(t))}{h_{ \beta, 0}'(n_{h_{ \beta,\lambda}}(t))}\left( \frac{\|G_F(x)-x\|}{n_{h_{ \beta,\lambda}}(t)-t}\right)^2 \\
&+(\kappa+\lambda{h_{ \beta, 0}'(n_{h_{ \beta,\lambda}}(t))}^{-1})(-h_{ \beta, \lambda}(t)/h_{ \beta,0}'(t)).
\end{align*}
Taking $\lambda\geq -\kappa f'(\beta)$, the second inequality in \eqref{eq:n.f} and \eqref{eq:eq.aux.k}, we obtain
 $$\big(\kappa+\lambda({h_{ \beta, 0}'(n_{h_{ \beta,\lambda}}(t))})^{-1}\big)\leq \kappa\big(1-f'(\beta)({h_{ \beta, 0}'(n_{h_{ \beta,\lambda}}(t))}^{-1}\big).$$
As $f'(t)={h'_{\beta,0}}(t)$, using Proposition~\ref{pr:1.5}, {\bf h2} and the second inequality in \eqref{eq:n.f}, we have
$$
\kappa\big(1-f'(\beta)({h_{ \beta, 0}'(n_{h_{ \beta,\lambda}}(t))}^{-1}\big)=\kappa\big(h_{ \beta, 0}'(\beta)-h_{ \beta, 0}'(n_{h_{ \beta,\lambda}}(t))\big)(-{h_{ \beta, 0}'(n_{h_{ \beta,\lambda}}(t))}^{-1}\leq 0.
$$
Combining the three above inequalities we conclude
$$
\|F'(G_F(x))^{\dagger}F(G_F(x))\| \leq \frac{-h_{ \beta, \lambda}(n_{h_{ \beta, \lambda}}(t))}{h_{ \beta, 0}'(n_{h_{ \beta,\lambda}}(t))}\left( \frac{\|G_F(x)-x\|}{n_{h_{ \beta,\lambda}}(t)-t}\right)^2.
$$
Therefore, item {\bf ii} follows from the last inequality and \eqref{NF}. Now, the last inequality combined with \eqref{eq:n.f.2}, \eqref{NF} and the second inequality in \eqref{eq:eq.aux.k} becomes
$$
\|F'(G_F(x))^{\dagger}F(G_F(x))\| \leq \frac{-h_{ \beta, \lambda}(n_{h_{ \beta, \lambda}}(t))}{h_{ \beta, 0}'(n_{h_{ \beta,\lambda}}(t))}.
$$
This result, together with \eqref{eq:auxNfNF}, shows that $G_F(x)\in
K(n_{h_{ \beta, \lambda}}(t))$, which proves item {\bf iii}.

The next inclusion (first on the second part), follows trivially
from definitions \eqref{E:K} and \eqref{eq:def.K}. To check the last
inclusion, take $x\in K$.  Then $x\in K(t)$ for some $t\in [0,t_*)$.
Using item {\bf iii} of the lemma, we conclude that $G_F(x)\in
K(n_{h_{ \beta,\lambda}}(t))$. To end the proof, note that $n_{h_{ \beta, \lambda}}(t)\in [0,t_*)$ and use
the definition of $K$.
\end{proof}
Finally, we are ready to prove the main result of this section, which
is an immediate consequence of the latter result. First note that the
sequence $\{x_k\}$ (see \eqref{ns.KT}) satisfies
\begin{equation} \label{NFS}
x_{k+1}=G_F(x_k),\qquad k=0,1,\ldots \,,
\end{equation}
which is indeed an equivalent definition of this sequence.
\begin{corollary}\label{auxcor}
The sequence $\{x_k\}$ is well defined, is contained in $B(x_0,t_*)$,
converges to a point $x_*\in B[x_0,t_*]$ such that $F'(x_*)^{\dagger}F(x_*)=0$, and $\{x_k\}$ and
$\{t_k\}$ satisfy \eqref{eq:bd} and \eqref{eq:002}.

 Moreover, if $\lambda=0$ ( $\lambda=0$ and  ${h_{ \beta, 0}'}(t_*)<0$), the sequences $\{t_k\}$ and $\{x_k\}$ converge Q-linearly and R-linearly
  (Q-quadratically and R-quadratically) to $t_{*}$ and  $x_{*}$, respectively.
\end{corollary}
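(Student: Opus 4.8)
The plan is to deduce everything from Lemma~\ref{NfNF} together with the facts about $\{t_k\}$ already collected in Corollary~\ref{cr:kanttk}, using the equivalent recursions $t_{k+1}=n_{h_{\beta,\lambda}}(t_k)$ (see \eqref{eq:tknk}) and $x_{k+1}=G_F(x_k)$ (see \eqref{NFS}). First I would check that the iteration starts inside the ``good'' set, i.e.\ that $x_0\in K(0)$: since $\|x_0-x_0\|=0$ and, by Proposition~\ref{pr:10}, $h_{\beta,\lambda}(0)=\beta$ and $h_{\beta,0}'(0)=-1$, the defining bound in \eqref{E:K} reads $\|F'(x_0)^{\dagger}F(x_0)\|\le -h_{\beta,\lambda}(0)/h_{\beta,0}'(0)=\beta$, which holds with equality by the definition of $\beta$. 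Then item~{\bf iii} of Lemma~\ref{NfNF} and a trivial induction give $x_k\in K(t_k)$ for every $k$; since $K(t_k)\subset B(x_0,t_*)$ by item~{\bf i}, this already shows that $\{x_k\}$ is well defined and contained in $B(x_0,t_*)$.

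The two estimates \eqref{eq:bd} and \eqref{eq:002} are then essentially bookkeeping. From $x_k\in K(t_k)$ and $\|x_{k+1}-x_k\|=\|F'(x_k)^{\dagger}F(x_k)\|$ I obtain $\|x_{k+1}-x_k\|\le -h_{\beta,\lambda}(t_k)/h_{\beta,0}'(t_k)=t_{k+1}-t_k$, which is the first inequality in \eqref{eq:bd}; summing a telescoping tail gives $\|x_{k+p}-x_k\|\le t_{k+p}-t_k$, so $\{x_k\}$ is Cauchy because $\{t_k\}$ is, by Corollary~\ref{cr:kanttk}, hence converges to some $x_*$, and letting $p\to\infty$ yields the second inequality in \eqref{eq:bd}; in particular $\|x_*-x_0\|\le t_*$, so $x_*\in B[x_0,t_*]$. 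For \eqref{eq:002} I would apply item~{\bf ii} of Lemma~\ref{NfNF} with $x=x_{k-1}$ and $t=t_{k-1}$: using $G_F(x_{k-1})=x_k$, $G_F(x_k)=x_{k+1}$ and $n_{h_{\beta,\lambda}}(t_{k-1})=t_k$, the prefactor $-h_{\beta,\lambda}(t_k)/h_{\beta,0}'(t_k)$ equals $t_{k+1}-t_k$, and the inequality becomes exactly \eqref{eq:002}.

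It remains to identify the limit, i.e.\ to show $F'(x_*)^{\dagger}F(x_*)=0$. The natural route is to pass to the limit in $F'(x_k)^{\dagger}F(x_k)=x_k-x_{k+1}$, whose norm is bounded by $t_{k+1}-t_k\to 0$, so the real issue is continuity of $x\mapsto F'(x)^{\dagger}F(x)$ at $x_*$. Since $F$ is $C^1$, this reduces to continuity of the Moore--Penrose inverse, which holds as soon as the rank is locally constant at $x_*$, i.e.\ $rank(F'(x_*))=rank(F'(x_0))$. I expect this to be the main obstacle. The clean case is $h_{\beta,0}'(t_*)<0$: then Proposition~\ref{wdns} bounds $\|F'(x_k)^{\dagger}\|$ uniformly in $k$, since $h_{\beta,0}'(t_k)\to h_{\beta,0}'(t_*)<0$, so the least nonzero singular value of $F'(x_k)$ stays bounded away from $0$; as $F'(x_k)\to F'(x_*)$ this forces $rank(F'(x_*))=rank(F'(x_0))$, whence $F'(x_k)^{\dagger}\to F'(x_*)^{\dagger}$ and $F'(x_*)^{\dagger}F(x_*)=\lim_k F'(x_k)^{\dagger}F(x_k)=0$. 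The delicate case is the degenerate one $h_{\beta,0}'(t_*)=0$, which by $h_{\beta,\lambda}'(t_*)=\lambda+h_{\beta,0}'(t_*)\le 0$ and $\lambda\ge 0$ forces $\lambda=0$ and makes $t_*$ a double root of $h_{\beta,0}$; there the uniform bound on $\|F'(x_k)^{\dagger}\|$ degenerates, and I would handle it either by reducing to the non-degenerate situation when $\|x_*-x_0\|<t_*$ so that $x_*\in B(x_0,t_*)$ and Proposition~\ref{wdns} applies directly, or by a limiting argument based on the constancy of $rank(F'(x_k))$ for every finite $k$ together with the perturbation estimate of Lemma~\ref{lem:ban}.

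Finally, the convergence rates split cleanly between the two sequences. Corollary~\ref{cr:kanttk} already gives Q-linear convergence of $\{t_k\}$ when $\lambda=0$, and Q-quadratic convergence when in addition $h_{\beta,0}'(t_*)<0$; combining these with the majorizing bound $\|x_*-x_k\|\le t_*-t_k$ from \eqref{eq:bd} transfers the same rates to $\{x_k\}$ in the R-sense, which is precisely the asserted R-linear (respectively R-quadratic) convergence of $\{x_k\}$ to $x_*$.
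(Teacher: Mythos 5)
Most of your argument coincides with the paper's: the verification $x_0\in K(0)$, the induction $x_k\in K(t_k)$ via item {\bf iii} of Lemma~\ref{NfNF}, the derivation of \eqref{eq:bd} by telescoping, the derivation of \eqref{eq:002} from item {\bf ii} with $x=x_{k-1}$, $t=t_{k-1}$, and the transfer of the rates through $\|x_*-x_k\|\le t_*-t_k$ are all exactly what the paper does. The genuine gap is in the step $F'(x_*)^{\dagger}F(x_*)=0$. You reduce it to continuity of $x\mapsto F'(x)^{\dagger}F(x)$ at $x_*$, hence to $rank(F'(x_*))=rank(F'(x_0))$, and your argument for that only works in the non-degenerate case $h_{\beta,0}'(t_*)<0$. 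In the degenerate case $h_{\beta,0}'(t_*)=0$ the uniform bound on $\|F'(x_k)^{\dagger}\|$ from Proposition~\ref{wdns} is lost and the rank can genuinely drop at the limit: rank is only lower semicontinuous, \eqref{posto} gives $rank(F'(x_*))\le rank(F'(x_k))$, which is the wrong direction for applying Lemma~\ref{lem:ban} with $A=F'(x_*)$, and applying it the other way requires $\|F'(x_k)^{\dagger}\|\|F'(x_k)-F'(x_*)\|<1$, which is exactly what degenerates. Moreover $\|x_*-x_0\|=t_*$ is not excluded, so neither of your two fallback sketches closes the argument.

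The paper sidesteps continuity of the Moore--Penrose inverse entirely by invoking hypothesis \eqref{rank} a second time, with $y=x_*$ and $x=x_k$. Decomposing $F(x_k)=(I_{\mathbb{R}^m}-F'(x_k)F'(x_k)^{\dagger})F(x_k)+F'(x_k)F'(x_k)^{\dagger}F(x_k)$ and noting $F'(x_k)F'(x_k)^{\dagger}F(x_k)=F'(x_k)(x_k-x_{k+1})$, one gets
$$
\|F'(x_*)^{\dagger}F(x_k)\|\le \kappa\|x_k-x_*\|+\|F'(x_*)^{\dagger}\|\,\|F'(x_k)\|\,\|x_{k+1}-x_k\|.
$$
Here $F'(x_*)^{\dagger}$ is a single fixed operator, so only continuity of $F$ and $F'$ is needed to pass to the limit, and both terms on the right tend to zero, giving $F'(x_*)^{\dagger}F(x_*)=0$ in all cases. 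You should replace your limit argument by this one; it is the one place in the convergence proof, beyond Lemma~\ref{NfNF}, where the structural assumption \eqref{rank} on the special class of systems is essential.
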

\begin{proof}
Since $\|F'(x_0)^{\dagger}F(x_0)\|=\beta$, using the  item {\bf i} of the Proposition~\ref{pr:10}, we have
\[  x_0\in K(0)\subset K,\]
where the second inclusion follows trivially from \eqref{eq:def.K}.  Using the above equation,
the inclusions $G_F(K)\subset K$ (Lemma \ref{NfNF}) and \eqref{NFS}, we conclude that the sequence
$\{x_k\}$ is well defined and lies in $K$. From the first inclusion in the second part of Lemma \ref{NfNF},
we have trivially that $\{x_k\}$ is contained in $B(x_0, t_*)$.

We will prove, by induction, that
\begin{equation}
        \label{eq:xktk}
        x_k\in K(t_k), \qquad k=0,1,\ldots \,.
\end{equation}
The above inclusion, for  $k=0$, is the first result in this proof.
Assume now that $x_k\in K(t_k)$. Thus, using item {\bf iii} of Lemma \ref{NfNF}, \eqref{eq:tknk} and \eqref{NFS}, we conclude that $x_{k+1}\in K(t_{k+1}),$, which completes the induction proof of \eqref{eq:xktk}.

Now, using
\eqref{eq:xktk} and \eqref{E:K}, we have
\[
  \|F'(x_k) ^{\dagger}F(x_k)\|\leq -{h_{\beta, \lambda}}(t_k)/{h'_{\beta,0}}(t_k),\qquad k=0,1,\ldots\,,
\]
which, using \eqref{ns.KT}, becomes
\begin{equation}
        \label{eq:xktx-2}
         \|x_{k+1}-x_k\|\leq t_{k+1}-t_k, \qquad k=0,1,\ldots\,.
\end{equation}
So, the first inequality in \eqref{eq:bd} holds. As $\{t_k\}$ converges to $t_*$, the last inequality implies
that
\[
   \sum_{k=k_0}^\infty \|x_{k+1}-x_k\|\leq
   \sum_{k=k_0}^\infty t_{k+1}-t_k =t_*-t_{k_0}<+\infty,
\]
for any $k_0\in\mathbb{N}$. Hence, $\{x_k\}$ is a Cauchy sequence in
$B(x_0, t_*)$, and so converges to some $x_*\in B[x_0,t_*]$.
The last inequality also implies that the second  inequality in \eqref{eq:bd} holds.

To prove that $F'(x_*)^{\dagger}F(x_*)=0$, note that, with simple algebraic manipulation, \eqref{rank} and
\eqref{ns.KT}, we obtain
\begin{align*}
\|F'(x_*)^{\dagger}F(x_k)\|&\leqslant \|F'(x_*)^{\dagger}\big(I-F'(x_k)F'(x_k)^{\dagger}\big)F(x_k)\|\\
&+ \|F'(x_*)^{\dagger}\|\|F'(x_k)F'(x_k)^{\dagger}F(x_k)\|\\
&\leq \kappa\|x_k-x_*\|+ \|F'(x_*)^{\dagger}\|\|F'(x_k)\|\|x_{k+1}-x_k\|.
\end{align*}
Due the fact that $F$ is continuously differentiable, we can take limit in the last inequality to conclude that
$F'(x_*)^{\dagger}F(x_*)=0$.

Since $  x_{k}\in K(t_{k})$, for all $k=0,1,\ldots,$ the inequality in \eqref{eq:002}, follows by
 applying item~{\bf ii} of the Lemma~\ref{NfNF} with $x=x_{k-1}$ and $t=t_{k-1}$ and using the definitions in \eqref{eq:tknk} and \eqref{NFS}.

To end the proof, combined the second inequality in \eqref{eq:bd}   with the last part of the Corollary~\ref{cr:kanttk}.
\end{proof}
Therefore, it follows from Corollaries \ref{cr:kanttk} and \ref{auxcor} that all statements in Theorem~\ref{th:knt} are valid.

\section{Special cases} \label{sec:ec}
In this section, we present some special cases of Theorem~\ref{th:knt}.
\subsection{Convergence result for $F'(x_0)$ surjective}
In this section we present a  theorem  under the  hypothesis that  $F'(x_0)$ is surjective.
In this case, we can use a  majorant condition, which gives the propriety that $\{x_k\}$ is invariant under  the function $\bar{F}\rightarrow A^{\dagger}F$, where
$A: \mathbb{R}^n \to \mathbb{R}^m$ is any surjective linear operator.
\begin{theorem}\label{th:kntmod}
Let $\Omega\subseteq\mathbb{R}^n$ be an open set and $F:\Omega\to \mathbb{R}^m$ a continuously differentiable function. Take
 $x_0\in\Omega$ such that $\beta:=\|F'(x_0)^{\dagger}F(x_0)\|>0$ and  $F'(x_0)$ is surjective. Suppose that there exist $R>0$ and  a continuously differentiable function  $f:[0,\;  R)\to \mathbb{R}$  such that,
$B(x_0,R)\subseteq \Omega$,
  \begin{equation}\label{KH1}
 \|F'(x_0)^{\dagger}(F'(y)-F'(x))\| \leq \bar{f}'(\|y-x\|+\|x-x_0\|)-\bar{f}'(\|x-x_0\|),
  \end{equation}
   for any $x,y\in B(x_0, R)$,  $\|x-x_0\|+\|y-x\|< R$,
  \begin{itemize}
  \item[{\bf h1)}] $\bar{f}(0)=0$, $\bar{f'}(0)=-1$;
  \item[{\bf h2)}] $\bar{f'}$ is convex and strictly increasing.
  \end{itemize}
 Consider the auxiliary function  $h_{ \beta}:[0,\;  R)\to \mathbb{R}$,
 \begin{equation}\label{defh1}
h_{ \beta}(t):=\beta + \bar{f}(t).
\end{equation}
If  $h_{ \beta}$   satisfies
\begin{itemize}
  \item[{\bf h3)}]   $h_{ \beta}(t)=0$   for some  $t \in (0,  R)$,
\end{itemize}
then $h_{ \beta}(t)$ has a smallest zero   $\bar{t}_* \in (0,  R)$,  the  sequences
for solving  $ h_{ \beta}(t)=0$ and $F(x)=~0$, with starting point $t_0=0$ and $x_0$,  respectively,
\begin{equation}\label{ns.KT1}
    t_{k+1} ={t_{k}}-h_{ \beta}'(t_{k}) ^{-1}h_{ \beta}(t_{k}),\quad x_{k+1}={x_k}-F'(x_k)^{\dagger}F(x_k), \quad k=0,1,\ldots\,,
\end{equation}
are well defined,  $\{t_{k}\}$ is strictly increasing, is contained in
  $[0,t_*)$, and converges Q-linearly to $\bar{t}_*$, $\{x_k\}$ is contained in $B(x_0,\bar{t}_*)$, and converges R-linearly to a point $x_*\in B[x_0, \bar{t}_*]$ such that
$F'(x_*)^{\dagger}F(x_*)=0$,
\begin{equation}\label{2020}
  \|x_{k+1}-x_{k}\|   \leq  t_{k+1}-t_{k} , \qquad \|x_*-x_{k}\|   \leq  \bar{t}_*-t_{k},\qquad  k=0, 1, \ldots\, ,
\end{equation}
$$
\|x_{k+1}-x_{k}\|\leq   \frac{t_{k+1}-t_{k}}{(t_{k}-t_{k-1})^2} \|x_k-x_{k-1}\|^2,
 \quad k=1,2, \ldots\,,
$$
\begin{equation}\label{eq:bd1112}
  \|F'(x_0)^{\dagger}F(x_k)\|\leq \left(  \frac{t_{k+1}-t_{k}}{t_{k}-t_{k-1}}\right)\|F'(x_0)^{\dagger}F(x_{k-1})\| ,\quad k=1,2, \ldots\,.
  \end{equation}
If, additionally, ${h_{ \beta}'}(t_*)<0$, then the sequences $\{t_{k}\}$ and $\{x_{k}\}$ converge Q-quadratically and R-quadratically to $t_{*}$ and  $x_{*}$, respectively.
\end{theorem}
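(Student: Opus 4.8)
The plan is to obtain Theorem~\ref{th:kntmod} as the surjective specialization of the machinery of Section~\ref{lkant}, with the two simplifications $\kappa=0$ and $\lambda=0$. Since $F'(x_0)$ is surjective I will first show that $F'(x)$ stays surjective on the relevant ball; then \eqref{121212} gives $F'(x)F'(x)^{\dagger}=I_{\mathbb{R}^m}$, the left-hand side of \eqref{rank} vanishes, so $\kappa=0$ and $\lambda=-\kappa f'(\beta)=0$ is admissible. Consequently the auxiliary function collapses to $h_{\beta,0}=\beta+\bar f=h_{\beta}$, and the refined majorant condition \eqref{KH1} plays the role of \eqref{KH}. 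The one real difference is that \eqref{KH1} bounds the \emph{scaled} increment $\|F'(x_0)^{\dagger}(F'(y)-F'(x))\|$ rather than the product $\|F'(x_0)^{\dagger}\|\,\|F'(y)-F'(x)\|$; this is exactly the statement that $\bar F:=F'(x_0)^{\dagger}F$ satisfies the ordinary condition \eqref{KH} with $\|\bar F'(x_0)^{\dagger}\|=\|F'(x_0)^{\dagger}F'(x_0)\|=1$, and, because $F'(x_0)^{\dagger}$ is injective when $F'(x_0)$ is surjective (see \eqref{121212}), the Gauss-Newton step for $F$ coincides with the one for $\bar F$. Thus the whole argument of Section~\ref{lkant} can be re-run with $F'(x_0)^{\dagger}$ inserted, which is what makes the scaled condition the natural one.

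Because $\kappa=\lambda=0$, everything concerning $\{t_k\}$ is already done: Propositions~\ref{pr:10}--\ref{pr:2} and Corollary~\ref{cr:kanttk} apply verbatim to $h_{\beta}=h_{\beta,0}$, giving the smallest root $\bar t_*\in(0,R)$, the monotone convergence $t_k\uparrow\bar t_*$, and --- since $\lambda=0$ --- Q-linear convergence in general and Q-quadratic convergence when $h_{\beta}'(\bar t_*)<0$. The next step is the analogue of Proposition~\ref{wdns}. For $\|x-x_0\|\le t<\bar t_*$ I would use \eqref{KH1}, {\bf h1}, {\bf h2} and the second inequality in \eqref{eq:n.f} to write $\|F'(x_0)^{\dagger}F'(x)-F'(x_0)^{\dagger}F'(x_0)\|=\|F'(x_0)^{\dagger}(F'(x)-F'(x_0))\|\le \bar f'(t)+1<1$. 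Since $F'(x_0)^{\dagger}F'(x_0)$ is the orthogonal projection onto $V:=Ker(F'(x_0))^{\perp}$ by \eqref{1111}, and $F'(x_0)^{\dagger}F'(x)$ maps $V$ into $V=im(F'(x_0)^{\dagger})$, restricting to $V$ and applying Banach's Lemma~\ref{lem:ban1} shows $M:=(F'(x_0)^{\dagger}F'(x)|_V)^{-1}$ exists with $\|M\|\le -1/h_{\beta}'(t)$. Injectivity of $F'(x_0)^{\dagger}$ then forces $rank(F'(x))=m$, i.e. $F'(x)$ surjective; and $R:=M\,F'(x_0)^{\dagger}$ is a right inverse of $F'(x)$ (one checks $F'(x)R=I_{\mathbb{R}^m}$ using $F'(x_0)F'(x_0)^{\dagger}=I$), so that $\|F'(x)^{\dagger}\|\le\|R\|\le -\|F'(x_0)^{\dagger}\|/h_{\beta}'(t)$, the last inequality because the Moore-Penrose inverse is the minimal-norm right inverse.

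With surjectivity secured, the convergence part mirrors Lemma~\ref{NfNF} and Corollary~\ref{auxcor}, and is in fact cleaner: the term $(I_{\mathbb{R}^m}-F'(x)F'(x)^{\dagger})F(x)$ vanishes identically, so $F(G_F(x))=E_F(x,G_F(x))$. Applying $F'(x_0)^{\dagger}$ and the refined linearization estimate (the analogue of Lemma~\ref{pr:taylor}, which under \eqref{KH1} bounds precisely $\|F'(x_0)^{\dagger}E_F(x,y)\|$) gives directly $\|F'(x_0)^{\dagger}F(G_F(x))\|\le e_{\bar f}(t,n_{h_{\beta}}(t))\big(\|G_F(x)-x\|/(n_{h_{\beta}}(t)-t)\big)^2$, where $n_{h_{\beta}}(t)=t-h_{\beta}(t)/h_{\beta}'(t)$. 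Feeding this through the sets $K(t)$ and the invariance $G_F(K(t))\subset K(n_{h_{\beta}}(t))$ reproduces well-definedness of $\{x_k\}$, its inclusion in $B(x_0,\bar t_*)$, convergence to $x_*$ with $F'(x_*)^{\dagger}F(x_*)=0$, the bounds in \eqref{2020}, and the quadratic step inequality. The new estimate \eqref{eq:bd1112} is then obtained by specializing to consecutive iterates: writing $F(x_k)=E_F(x_{k-1},x_k)$ (valid since $F'(x_{k-1})$ is surjective), bounding $\|F'(x_0)^{\dagger}F(x_k)\|$ by the refined Taylor lemma, and converting to the ratio form via the identity $h_{\beta}(t_k)=e_{\bar f}(t_{k-1},t_k)=-h_{\beta}'(t_k)(t_{k+1}-t_k)$ (the $\lambda=0$ case of \eqref{rqs}), the monotonicity relations of Proposition~\ref{pr:1}, and the invariance identity $F'(x_{k-1})^{\dagger}F(x_{k-1})=(F'(x_0)^{\dagger}F'(x_{k-1}))^{\dagger}F'(x_0)^{\dagger}F(x_{k-1})$. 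The R-linear, resp. R-quadratic, convergence of $\{x_k\}$ follows from the second inequality in \eqref{2020} and the rate of $\{t_k\}$, exactly as at the end of the proof of Corollary~\ref{auxcor}.

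The main obstacle, and the only place where one cannot simply quote Theorem~\ref{th:knt}, is that \eqref{KH1} is strictly weaker than \eqref{KH}: the factor $\|F'(x_0)^{\dagger}\|$ can no longer be pulled out, so both the perturbation argument for surjectivity and the linearization bound must be carried out for the composed operator $F'(x_0)^{\dagger}(\,\cdot\,)$. The delicate technical point is the order of multiplication --- \eqref{KH1} controls $F'(x_0)^{\dagger}F'(x)$, whereas Lemma~\ref{lem:ban} is phrased in terms of $\|A^{\dagger}\|\,\|A-B\|$ --- and I resolve it by reducing the invertibility question to the $m$-dimensional subspace $V=Ker(F'(x_0))^{\perp}$, on which $F'(x_0)^{\dagger}F'(x_0)$ acts as the identity, and then transferring the full-rank conclusion and the right-inverse bound back to $F'(x)$ through the injectivity of $F'(x_0)^{\dagger}$.
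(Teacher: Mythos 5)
Your proposal is correct and follows essentially the same route as the paper: pass to $\bar F := F'(x_0)^{\dagger}F$, observe that surjectivity gives $\kappa=0$ (hence $\lambda=0$), that $\|\bar F'(x_0)^{\dagger}\|=\|F'(x_0)^{\dagger}F'(x_0)\|=1$ turns \eqref{KH1} into \eqref{KH}, and that the Gauss--Newton iterates of $F$ and $\bar F$ coincide, and then prove \eqref{eq:bd1112} by a separate argument combining the bound $\|F'(x_{k-1})^{\dagger}F'(x_0)\|\le -1/h_{\beta}'(t_{k-1})$ with the identity $F(x_k)=E_F(x_{k-1},x_k)$ and the Taylor estimate. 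The only real difference is presentational: the paper simply verifies the hypotheses of Theorem~\ref{th:knt} for $\bar F$ and quotes that theorem, whereas you re-run the Section~\ref{lkant} machinery with the composed operator (your subspace-restricted Banach-lemma argument for surjectivity and the right-inverse bound is correct but not needed once one notices that Proposition~\ref{wdns} applied to $\bar F$ already supplies everything), so your remark that one ``cannot simply quote Theorem~\ref{th:knt}'' is slightly overstated.
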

\begin{proof}
Let  $\bar{F}:\Omega \to \mathbb{R}^m$ be defined by
\begin{equation}\label{1010}
 \bar{F}(x)=F'(x_0)^{\dagger}F(x), \qquad x \in \Omega.
\end{equation}
Under the hypothesis of the theorem, we will prove that $\bar{F}$ satisfies all assumptions of the Theorem \ref{th:knt}.  Hence,
with the exception of \eqref{eq:bd1112}, the statements  of the theorem follow from Theorem~\ref{th:knt}.

First of all, as $F'(x_0)$ is surjective, it follows from \eqref{121212} that
\begin{equation}\label{1212}
F'(x_0)F'(x_0)^{\dagger}=I_{\mathbb{R}^m}, \qquad (F'(x_0)F'(x_0)^{\dagger})^{\dagger}=F'(x_0)F'(x_0)^{\dagger}.
\end{equation}
Now, take $x\in B[x_0,t]$, $0\leq t\leq \bar{t}_*$.  Using the assumptions
  \eqref{KH1}, {\bf h1} and  {\bf h2},   we   obtain
$$
 \|F'(x_0)^{\dagger}[F'(x)-F'(x_0)]\|
    \leqslant  \bar{f}'(\|x-x_0\|)-\bar{f}'(0)\leqslant
     \bar{f}'(t)+1<1.
$$
Using Lemma~\ref{lem:ban1}, the above equation and the first equation in \eqref{1212}, we conclude that
  $\big(I_{\mathbb{R}^n}-F'(x_0)^{\dagger}(F'(x_0)-F'(x))\big)$ is non-singular   and
  \begin{equation}\label{1012}     \|\big(I_{\mathbb{R}^n}-F'(x_0)^{\dagger}(F'(x_0)-F'(x))\big)^{-1}\| \leqslant
    \frac{1}{1-\left(\bar{f}'(t)+1\right)}=-\frac{1}{\bar{f}'(t)}.
\end{equation}
Again, the first equation in \eqref{1212} implies that
$F'(x)=F'(x_0)(I_{\mathbb{R}^n}-F'(x_0)^{\dagger}(F'(x_0)-F'(x))),$ which, using $F'(x_0)$ is surjective
and  $\big(I_{\mathbb{R}^n}-F'(x_0)^{\dagger}(F'(x_0)-F'(x))\big)$ is non-singular, yields $F'(x)$ is surjective for all
$x \in B(x_0,\bar{t}_*).$ 
Hence, using \eqref{1010} and properties of the Moore-Penrose inverse, we have
$$(\bar{F'}(x))^{\dagger}=(F'(x_0)^{\dagger}F'(x))^{\dagger}=F'(x)^{\dagger}F'(x_0),\qquad \forall \; x \in \Omega. $$
The latter inequality implies that $\bar{F'}$ satisfies \eqref{rank} with $\kappa=0$ and the second sequence in \eqref{ns.KT1} coincides with
the second sequence in \eqref{ns.KT}. Moreover, using \eqref{1010},   \eqref{1212} and \eqref{1111}, we obtain
\begin{equation}\label{1515}
\|\bar{F'}(x_0)^{\dagger} \bar{F}'(x_0)\|=\|({F'}(x_0)^{\dagger} {F}'(x_0))^{\dagger}{F'}(x_0)^{\dagger} {F}'(x_0)\|=\|{F'}(x_0)^{\dagger} {F}'(x_0)\|
\end{equation}
and
\begin{equation}\label{15151}
\|\bar{F'}(x_0)^{\dagger}\|=\|{F'}(x_0)^{\dagger} {F}'(x_0)\|=\| \Pi_{Ker({F}'(x_0))^{\bot}}\|=1.
\end{equation}
Accordingly, \eqref{1515} implies that $\|\bar{F'}(x_0)^{\dagger} \bar{F}'(x_0)\|>0$, and \eqref{15151} together with \eqref{KH1} and \eqref{1010} implies that $\bar{F'}$ satisfies~\eqref{KH} with $f=\bar{f}$.

Therefore, with the exception \eqref{eq:bd1112}, the result of the theorem follow from Theorem~\ref{th:knt} with $F=\bar{F},$ $f=\bar{f}$, $h_{ \beta, \lambda}=h_{ \beta}$, $\lambda=0 $ and $t_*=\bar{t}_* $.

Our task is now to show that  \eqref{eq:bd1112} holds.

Take $k \in \{1,2,\ldots\,\}$. Using the first equation in \eqref{1212}, it follows by simple calculus that
$$F'(x_{k-1})^{\dagger}F'(x_0)\big(I_{\mathbb{R}^n}-F'(x_0)^{\dagger}(F'(x_0)-F'(x_{k-1}))\big)=F'(x_{k-1})^{\dagger}F'(x_{k-1}),$$
which, combined with \eqref{1111}, \eqref{1012}  and $\|x_{k-1}-x_{0}\|   \leq  t_{k-1}  \leq \bar{t}_*$,  yields
\begin{align*}
\|F'(x_{k-1})^{\dagger}F'(x_0)\|&\leq  \|\Pi_{Ker({F}'(x_{k-1}))^{\bot}}(I_{\mathbb{R}^n}-F'(x_0)^{\dagger}(F'(x_0)-F'(x_{k-1}))\big)^{-1}\|\\
&\leq  \|(I_{\mathbb{R}^n}-F'(x_0)^{\dagger}(F'(x_0)-F'(x_{k-1}))\big)^{-1}\|\\
&\leq -(h'_{\beta}(t_{k-1}))^{-1}.
\end{align*}
Hence, using \eqref{ns.KT1} and the first equation in \eqref{1212}, we obtain
\begin{equation}\label{5050}
 \|x_{k}-x_{k-1}\|=\|F'(x_{k-1})^{\dagger}F(x_{k-1})\|\leq-(h'_{\beta}(t_{k-1}))^{-1}\|F'(x_0)^{\dagger}F(x_{k-1})\|.
 \end{equation}
Since $F(x_{k-1})$ is also surjective, it follows from \eqref{121212} that $F'(x_{k-1})F'(x_{k-1})^{\dagger}=I_{\mathbb{R}^m}$, which combined with
Lemma~\ref{pr:taylor} and \eqref{2020} gives
\begin{align*}
\|F'(x_{0})^{\dagger}F(x_k)\|&= \|F'(x_0)^{\dagger}(F(x_k)-F(x_{k-1})-F'(x_{k-1})(x_k-x_{k-1})\|\\
                             &=\|F'(x_0)^{\dagger}\|\|E_F(x_{k-1},x_k)\|\\
                             &\leq e_f(t_{k-1},t_k)\frac{\|x_k-x_{k-1}\|}{(t_{k}-t_{k-1})}\\
                             &=h_{\beta}(t_{k})\frac{\|x_k-x_{k-1}\|}{(t_{k}-t_{k-1})},
\end{align*}
where the latter equation is obtained by combining \eqref{eq:def.erf}, \eqref{defh1} and \eqref{ns.KT1}. Taking into account
the last inequality, \eqref{5050}, $\{t_{k}\}$ and $h'_{\beta}$ are strictly increasing,  we have
\begin{align*}
\|F'(x_{0})^{\dagger}F(x_k)\|  &\leq -\frac{h_{\beta}(t_{k})}{h'_{\beta}(t_{k-1})}\frac{\|F'(x_{0})^{\dagger}F(x_{k-1})\|}{(t_{k}-t_{k-1})}\\
                               &\leq -\frac{h_{\beta}(t_{k})}{h'_{\beta}(t_{k})}\frac{\|F'(x_{0})^{\dagger}F(x_{k-1})\|}{(t_{k}-t_{k-1})}.
\end{align*}
Therefore, the last inequality, together with the definition of $\{t_{k}\}$ in \eqref{ns.KT1}, imply the desired inequality.
\end{proof}

\subsection{Convergence result for Lipschitz condition}
In this section, we first present a theorem corresponding to Theorem
\ref{th:knt}, but under the Lipschitz condition instead of the general assumption
\eqref{KH}. We also present a theorem corresponding to Theorem~\eqref{th:kntmod}, but under the Lipschitz condition instead of assumption
\eqref{KH1}.
\begin{theorem}\label{th:kntL}
Let $\Omega\subseteq\mathbb{R}^n$ be an open set and $F:\Omega\to \mathbb{R}^m$ a continuously differentiable  function. Suppose that
 $$
\left\|F'(y)^{\dagger}(I_{\mathbb{R}^m}-F'(x)F'(x)^{\dagger})F(x)\right\| \leq \kappa \|x-y\|,\qquad \forall \;x,y \;\in \Omega
$$
for some $0 \leq \kappa < 1$. Take  $x_0\in\Omega$ such that $\beta:=\|F'(x_0)^{\dagger}F(x_0)\|>0$,  $F'(x_0)\neq 0$ and
 $$
  rank(F'(x))\leq rank(F'(x_0)), \qquad \forall \; x \in \Omega.
$$
Suppose that there exist $R>0$ and  $L>0$, such that $B(x_0,R)\subseteq \Omega$,
$$
\|F'(x_0)^{\dagger}\|\|F'(x)-F'(y)\| \leq L\|x-y\|, \qquad \forall\; x, y \in B(x_0, R)
$$
Take $\lambda=(1-\beta L)\kappa$ and consider the auxiliary function  $h_{ \beta, \lambda}:[0,\;  R)\to \mathbb{R}$,
$$
h_{ \beta, \lambda}(t):= \beta -(1- \lambda)t+ (Lt^2)/2.
$$
If
$$ \beta L\leq \Delta:=\frac{(1-\kappa)^2}{(\kappa^2-\kappa+1)+\sqrt{2\kappa^2-2\kappa+1}},$$
then $h_{ \beta, \lambda}(t)$ has a smallest zero   $t_*=\big(1-\lambda-\sqrt{(1-\lambda)^2 -2\beta L}\big)/L$,  the  sequences
for solving  $ h_{ \beta, \lambda}(t)=0$ and $F(x)=~0$, with starting point $t_0=0$ and $x_0$,  respectively,
$$
    t_{k+1} ={t_{k}}-h_{ \beta,0}'(t_{k}) ^{-1}h_{ \beta, \lambda}(t_{k}),\quad x_{k+1}={x_k}-F'(x_k)^{\dagger}F(x_k), \quad k=0,1,\ldots\,,
$$
are well defined,  $\{t_{k}\}$ is strictly increasing, is contained in
  $[0,t_*)$, and converges  to $t_*$, $\{x_k\}$ is contained in $B(x_0,t_*)$, converges to a point  $x_*\in B[x_0, t_*]$ such that
$F'(x_*)^{\dagger}F(x_*)=0$  and
$$
  \|x_{k+1}-x_{k}\|   \leq  t_{k+1}-t_{k} , \qquad \|x_*-x_{k}\|   \leq  t_*-t_{k},\qquad  k=0, 1, \ldots\, ,
 $$
 $$
\|x_{k+1}-x_{k}\|\leq   \frac{t_{k+1}-t_{k}}{(t_{k}-t_{k-1})^2} \|x_k-x_{k-1}\|^2,
 \quad k=1,2, \ldots\,.
$$
Moreover, if $\lambda=0$ ($\lambda=0$ and  ${h_{ \beta, 0}'}(t_*)<0$), then the sequences $\{t_k\}$ and $\{x_k\}$ converge Q-linearly and R-linearly
 (Q-quadratically and R-quadratically) to $t_{*}$ and $x_{*}$, respectively.
\end{theorem}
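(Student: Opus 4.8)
The plan is to recognize Theorem \ref{th:kntL} as the special case of Theorem \ref{th:knt} in which the majorant function is the quadratic $f(t) = -t + (Lt^2)/2$, and then merely to verify that all hypotheses of Theorem \ref{th:knt} are met; once this is done, every conclusion of Theorem \ref{th:kntL} transfers verbatim. Accordingly I would set $f(t):= -t + (Lt^2)/2$, so that $f'(t) = -1 + Lt$, and dispatch the structural requirements first.

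The routine verifications run as follows. Since $f'$ is affine, the telescoping difference collapses,
$$
f'(\|y-x\| + \|x-x_0\|) - f'(\|x-x_0\|) = L\|y-x\|,
$$
so the Lipschitz hypothesis $\|F'(x_0)^{\dagger}\|\,\|F'(x) - F'(y)\| \leq L\|x-y\|$ is precisely the majorant condition \eqref{KH}. Clearly $f(0) = 0$ and $f'(0) = -1$, giving {\bf h1}, and $f'$ is affine with positive slope $L$, hence convex and strictly increasing, giving {\bf h2}. Next, $f'(\beta) = L\beta - 1$, whence $-\kappa f'(\beta) = \kappa(1 - \beta L) = (1-\beta L)\kappa = \lambda$; thus the prescribed $\lambda$ is exactly the smallest admissible value $-\kappa f'(\beta)$ highlighted in Remark \ref{lambda} (legitimate since $f'(\beta) \leq 0$), and because $\beta L \leq \Delta < 1$ one has $\lambda = (1-\beta L)\kappa \geq 0$. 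Finally $h_{\beta,\lambda}(t) = \beta + \lambda t + f(t) = \beta - (1-\lambda)t + (Lt^2)/2$ coincides with the auxiliary function in the statement.

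The only substantive point is hypothesis {\bf h3}, and here the threshold $\beta L \leq \Delta$ enters. As $h_{\beta,\lambda}$ is a quadratic opening upward with $h_{\beta,\lambda}(0) = \beta > 0$ and vertex at $t = (1-\lambda)/L > 0$, it has a smallest positive real zero precisely when its discriminant $(1-\lambda)^2 - 2\beta L$ is nonnegative. Writing $u := \beta L$ and inserting $\lambda = (1-u)\kappa$, a direct expansion recasts the discriminant as a quadratic in $u$,
$$
(1-\lambda)^2 - 2\beta L = \kappa^2 u^2 - 2(\kappa^2 - \kappa + 1)\,u + (1-\kappa)^2 .
$$
The heart of the argument is to show that the smaller root of this quadratic in $u$ is exactly $\Delta$: solving it produces the radical $\sqrt{(\kappa^2-\kappa+1)^2 - \kappa^2(1-\kappa)^2} = \sqrt{2\kappa^2 - 2\kappa + 1}$, and after rationalizing, the smaller root becomes $(1-\kappa)^2/[(\kappa^2-\kappa+1) + \sqrt{2\kappa^2 - 2\kappa + 1}] = \Delta$. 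Hence $0 \leq u = \beta L \leq \Delta$ forces the discriminant to be nonnegative, the smallest zero is $t_* = [(1-\lambda) - \sqrt{(1-\lambda)^2 - 2\beta L}]/L$ as claimed, and {\bf h3} holds whenever $t_* < R$. This identification of $\Delta$ with the discriminant threshold of the quadratic majorant is the main obstacle; everything else is mechanical.

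With all hypotheses of Theorem \ref{th:knt} in place, I would invoke it directly to obtain well-definedness, the inclusions, convergence of $\{t_k\}$ and $\{x_k\}$ to $t_*$ and $x_*$ with $F'(x_*)^{\dagger}F(x_*) = 0$, and the stated error bounds. For the rates, note that $\lambda = 0$ occurs exactly when $\kappa = 0$; in that case $h'_{\beta,0}(t_*) = -\sqrt{1 - 2\beta L}$, which is strictly negative precisely when $\beta L < \Delta = 1/2$. Thus Theorem \ref{th:knt} delivers the Q- and R-linear rates in general and the Q- and R-quadratic rates under the strict inequality, matching the final assertion of the theorem.
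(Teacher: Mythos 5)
Your proposal is correct and follows essentially the same route as the paper: specialize Theorem \ref{th:knt} to the quadratic majorant $f(t)=Lt^{2}/2-t$, check {\bf h1}--{\bf h3} and $\lambda=-\kappa f'(\beta)=(1-\beta L)\kappa\geq 0$, and read off all conclusions. Your identification of $\Delta$ as the smaller root (in $u=\beta L$) of the discriminant quadratic $\kappa^{2}u^{2}-2(\kappa^{2}-\kappa+1)u+(1-\kappa)^{2}$ supplies exactly the computation the paper leaves implicit when it asserts that $\beta L\leq\Delta$ gives $(1-\lambda)^{2}-2\beta L\geq 0$.
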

\begin{proof}
It is immediate to prove that $F$, $x_0$ and $f:[0,R)\to \mathbb{R}$ defined by $ f(t)=Lt^{2}/2-t, $ satisfy the
inequality \eqref{KH}, conditions {\bf h1} and {\bf h2}.  Hence,
$$
h_{ \beta, \lambda}(t):= \beta -(1- \lambda)t+ (Lt^2)/2=\beta + \lambda  t+ f(t).
$$
Since,
\begin{equation}\label{s12}
\beta L\leq\Delta=\frac{(1-\kappa)^2}{(\kappa^2-\kappa+1)+\sqrt{2\kappa^2-2\kappa+1}}=\frac{(1-\kappa)^2}{(1-\kappa)^2+\kappa+\sqrt{2\kappa^2-2\kappa+1}}\leq1,
\end{equation}
we have   $\lambda=(1-\beta L)\kappa\geq0$ and $\lambda=-\kappa f'(\beta)$. Moreover, the first inequality in \eqref{s12} implies that $(1-\lambda)^2-2\beta L\geq0$, i.e.,  $h_{ \beta, \lambda}$ satisfies {\bf h3} and   $t_*=\big(1-\lambda-\sqrt{(1-\lambda)^2 -2\beta L}\big)/L$ is its smallest root.

Therefore, taking $f$, $h_{ \beta, \lambda}$, $\lambda$ and $t_*$  as defined above, all  the statements  of the theorem follow from Theorem~\ref{th:knt}.
\end{proof}

Under the Lipschitz condition, Theorem~\ref{th:kntmod} becomes:

\begin{theorem}\label{th:kntL12}
Let $\Omega\subseteq\mathbb{R}^n$ be an open set and $F:\Omega\to \mathbb{R}^m$ a continuously differentiable  function. Take
 $x_0\in\Omega$ such that $\beta:=\|F'(x_0)^{\dagger}F(x_0)\|>0$ and  $F'(x_0)$ is surjective.
Suppose that there exist $R>0$ and  $L>0$, such that $B(x_0,R)\subseteq \Omega$,
$$
\|F'(x_0)^{\dagger}(F'(x)-F'(y))\| \leq L\|x-y\|, \qquad \forall\; x, y \in B(x_0, R)
$$
Consider the auxiliary function  $h_{ \beta}:[0,\;  R)\to \mathbb{R}$,
$$
h_{ \beta}(t):= \beta -t +(Lt^2)/2.
$$
If
$ \beta L\leq {1}/{2},$
then $h_{ \beta}(t)$ has a smallest zero   $\bar{t_*}=\big(1-\sqrt{1 -2\beta L}\big)/L$,  the  sequences
for solving  $ h_{ \beta}(t)=0$ and $F(x)=~0$, with starting point $t_0=0$ and $x_0$,  respectively,
$$
    t_{k+1} ={t_{k}}-h_{ \beta}'(t_{k}) ^{-1}h_{ \beta}(t_{k}),\quad x_{k+1}={x_k}-F'(x_k)^{\dagger}F(x_k), \quad k=0,1,\ldots\,,
$$
are well defined,  $\{t_{k}\}$ is strictly increasing, is contained in
  $[0,t_*)$, and converges Q-linearly to $\bar{t}_*$, $\{x_k\}$ is contained in $B(x_0,\bar{t}_*)$, and converges R-linearly to a point  $x_*\in B[x_0, \bar{t}_*]$ such that
$F'(x_*)^{\dagger}F(x_*)=0$,
$$
  \|x_{k+1}-x_{k}\|   \leq  t_{k+1}-t_{k} , \qquad \|x_*-x_{k}\|   \leq  \bar{t}_*-t_{k},\qquad  k=0, 1, \ldots\, ,
$$
$$
\|x_{k+1}-x_{k}\|\leq   \frac{t_{k+1}-t_{k}}{(t_{k}-t_{k-1})^2} \|x_k-x_{k-1}\|^2,
 \quad k=1,2, \ldots\,,
$$
$$
  \|F'(x_0)^{\dagger}F(x_k)\|\leq \left(  \frac{t_{k+1}-t_{k}}{t_{k}-t_{k-1}}\right)\|F'(x_0)^{\dagger}F(x_{k-1})\| ,\quad k=1,2, \ldots\,.
$$
If, additionally, $ \beta L< {1}/{2},$, then the sequences $\{t_{k}\}$ and $\{x_{k}\}$ converge Q-quadratically and R-quadratically to $t_{*}$ and  $x_{*}$, respectively.
\end{theorem}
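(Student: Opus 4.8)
The plan is to reduce everything to Theorem~\ref{th:kntmod} by exhibiting the quadratic $\bar f(t)=Lt^2/2-t$ as the majorant function, exactly as was done for the general case in the proof of Theorem~\ref{th:kntL}. First I would record the elementary facts $\bar f(0)=0$ and $\bar f'(t)=Lt-1$, so that $\bar f'(0)=-1$ and $\bar f'$ is affine with positive slope $L>0$; this gives conditions {\bf h1} and {\bf h2} at once. Substituting this $\bar f'$ into the right-hand side of \eqref{KH1} produces the telescoping $\bar f'(\|y-x\|+\|x-x_0\|)-\bar f'(\|x-x_0\|)=L\|y-x\|$, so the abstract majorant inequality \eqref{KH1} collapses precisely to the Lipschitz hypothesis assumed in the statement. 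Thus $\bar f$ is a legitimate majorant function for $F$ on $B(x_0,R)$.

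Next I would analyze the auxiliary function $h_\beta(t)=\beta+\bar f(t)=\beta-t+Lt^2/2$ from \eqref{defh1}. This is a convex parabola with $h_\beta(0)=\beta>0$, and its discriminant is $1-2\beta L$. Hence the condition $\beta L\le 1/2$ is exactly what guarantees a real root, verifying {\bf h3}, and the quadratic formula yields the smallest zero $\bar t_*=(1-\sqrt{1-2\beta L})/L$ as claimed (one should also note $\bar t_*<R$, which is implicit in {\bf h3} requiring the zero to lie in $(0,R)$ and which fixes how large $R$ must be taken). With all hypotheses of Theorem~\ref{th:kntmod} verified for this choice of $\bar f$, $h_\beta$ and $\bar t_*$, every conclusion of that theorem---well-definedness and containment of the two sequences, the error bounds, the residual estimate \eqref{eq:bd1112}, and Q-/R-linear convergence---transfers verbatim.

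Finally, for the quadratic-convergence clause I would simply differentiate: since $h_\beta'(t)=Lt-1$, evaluating at the smallest root gives $h_\beta'(\bar t_*)=L\bar t_*-1=-\sqrt{1-2\beta L}$. Therefore the extra hypothesis ${h_\beta'}(\bar t_*)<0$ of Theorem~\ref{th:kntmod} is equivalent to $1-2\beta L>0$, i.e.\ to the strict inequality $\beta L<1/2$, under which Theorem~\ref{th:kntmod} upgrades the convergence to Q-quadratic for $\{t_k\}$ and R-quadratic for $\{x_k\}$. I do not anticipate any genuine obstacle here: the entire content is the identification of the correct quadratic majorant together with elementary parabola arithmetic, the only point needing a word of care being the verification that the computed root $\bar t_*$ actually lies in the interval $(0,R)$ on which the Lipschitz bound is assumed.
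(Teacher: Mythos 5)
Your proposal is correct and follows essentially the same route as the paper: the paper's proof is a one-line reduction ``follows the same pattern as the proof of Theorem~\ref{th:kntL}'', which is precisely your argument of taking $\bar f(t)=Lt^2/2-t$ as the majorant, verifying {\bf h1}--{\bf h3} via the discriminant $1-2\beta L$, and invoking Theorem~\ref{th:kntmod}. Your computation $h_\beta'(\bar t_*)=-\sqrt{1-2\beta L}$ for the quadratic-convergence clause, and your remark that $\bar t_*<R$ must be checked, are both consistent with (indeed slightly more careful than) the paper's treatment.
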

\begin{proof}
The proof follows the same pattern as the proof of the Theorem~\ref{th:kntL}.
\end{proof}
\subsection{Convergence result under Smale's condition }
In this section, we first present a theorem corresponding to Theorem
\ref{th:knt}, but under Smale's $\alpha$-condition, see \cite{Dedieu2002,MR16517501,S86}.
 We also present a theorem corresponding to Theorem~\eqref{th:kntmod}, but under Smale's $\alpha$-condition instead of the assumption
\eqref{KH1}.

To simplify, we take $\lambda=\kappa$ in the next theorem. As seen in Remark~\ref{lambda}, this is always a possible choice for $\lambda$.

\begin{theorem}\label{theo:Smale}
Let $\Omega\subseteq\mathbb{R}^n$ be an open set and $F:\Omega\to \mathbb{R}^m$ an analytic function. Suppose that
 $$
\left\|F'(y)^{\dagger}(I_{\mathbb{R}^m}-F'(x)F'(x)^{\dagger})F(x)\right\| \leq \kappa \|x-y\|,\qquad \forall \;x,y \;\in \Omega
$$
for some $0 \leq \kappa < 1$. Take  $x_0\in\Omega$ such that $\beta:=\|F'(x_0)^{\dagger}F(x_0)\|>0$,  $F'(x_0)\neq 0$ and
 $$
  rank(F'(x))\leq rank(F'(x_0)), \qquad \forall \; x \in \Omega.
$$
Suppose that
\begin{equation} \label{eq:SmaleCond}
  \gamma := \|F'(x_0)^{\dagger}\|\sup _{ n > 1 }\left\| \frac
{F^{(n)}(x_0)}{n !}\right\|^{1/(n-1)}<+\infty, \qquad B(x_0,1/\gamma)\subseteq \Omega.
\end{equation}
Consider the auxiliary function   $h_{ \beta, \kappa}:[0,\; 1/\gamma)\to \mathbb{R}$,
$$
h_{ \beta, \kappa}(t):= \beta -(2- \kappa)t+ t/(1-\gamma t).
$$
If $$ \alpha:=\beta \gamma\leq3-2\sqrt{2},$$
then $h_{ \beta, \kappa}(t)$ has a smallest zero $t_*=\big(1-\kappa+\alpha-\sqrt{(1-\kappa+\alpha)^2 -4(2-\kappa)\alpha}\big)/(2\gamma(2-\kappa))$,  the  sequences
for solving $ h_{ \beta, \kappa}(t)=0$ and $F(x)=~0$, with starting point $t_0=0$ and $x_0$, respectively,
$$
    t_{k+1} ={t_{k}}-h_{ \beta,0}'(t_{k}) ^{-1}h_{ \beta, \kappa}(t_{k}),\quad x_{k+1}={x_k}-F'(x_k)^{\dagger}F(x_k), \quad k=0,1,\ldots\,,
$$
are well defined,  $\{t_{k}\}$ is strictly increasing, is contained in
  $[0,t_*)$, and converges  to $t_*$, $\{x_k\}$ is contained in $B(x_0,t_*)$, converges to a point  $x_*\in B[x_0, t_*]$ such that
$F'(x_*)^{\dagger}F(x_*)=0$ and
$$
  \|x_{k+1}-x_{k}\|   \leq  t_{k+1}-t_{k} , \qquad \|x_*-x_{k}\|   \leq  t_*-t_{k},\qquad  k=0, 1, \ldots\, ,
$$
$$
\|x_{k+1}-x_{k}\|\leq   \frac{t_{k+1}-t_{k}}{(t_{k}-t_{k-1})^2} \|x_k-x_{k-1}\|^2,
 \quad k=1,2, \ldots\,.
$$
Moreover, if $\kappa=0$ ($\kappa=0$ and ${h_{ \beta, 0}'}(t_*)<0$), then the sequences $\{t_k\}$ and $\{x_k\}$ converge Q-linearly and R-linearly
 (Q-quadratically and R-quadratically) to $t_{*}$ and $x_{*}$, respectively.
\end{theorem}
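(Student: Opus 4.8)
The plan is to reduce Theorem~\ref{theo:Smale} to Theorem~\ref{th:knt} by exhibiting the Smale-type majorant function and verifying its hypotheses, exactly in the spirit of the proofs of Theorems~\ref{th:kntL} and~\ref{th:kntL12}. The natural candidate is the classical $\gamma$-majorant
\[
f(t):=\frac{t}{1-\gamma t}-2t,\qquad t\in[0,1/\gamma),
\]
which is the standard majorant function arising in Smale's $\alpha$-theory. First I would record its derivatives: $f'(t)=\big(1-\gamma t\big)^{-2}-2$, so that $f(0)=0$ and $f'(0)=-1$, giving \textbf{h1}; and $f''(t)=2\gamma(1-\gamma t)^{-3}>0$ with $f'''>0$, so $f'$ is convex and strictly increasing, giving \textbf{h2}.

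Next I would verify the majorant inequality~\eqref{KH}. This is the one genuinely analytic step, and it is the place where the hypothesis that $F$ is \emph{analytic} together with the definition~\eqref{eq:SmaleCond} of $\gamma$ is used. The argument is standard: writing $F'(y)-F'(x)$ as an integral of $F''$ along the segment and expanding $F''$ in its Taylor series about $x_0$, one bounds $\|F'(x_0)^\dagger\|\,\|F''(\cdot)\|$ termwise by the series $\sum_{n\ge 2} n(n-1)\gamma^{n-1}s^{n-2}=f''$ evaluated at the appropriate argument, and then integrates to obtain precisely $f'(\|y-x\|+\|x-x_0\|)-f'(\|x-x_0\|)$. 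I would cite the corresponding computation in \cite{Dedieu2002,MR16517501,S86} rather than reproducing it. \textbf{The main obstacle} is purely bookkeeping here: making sure the Taylor-series bound on $F''$ matches $f''$ with the correct combinatorial coefficients and that the radius of convergence $1/\gamma$ is respected, which is why the domain is taken to be $B(x_0,1/\gamma)$.

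With the majorant established, I would identify the auxiliary function. Since $\lambda=\kappa$ is chosen (admissible by Remark~\ref{lambda}, as $-f'(\beta)<1$), the definition~\eqref{defh} gives
\[
h_{\beta,\kappa}(t)=\beta+\kappa t+f(t)=\beta-(2-\kappa)t+\frac{t}{1-\gamma t},
\]
matching the statement. It remains to check \textbf{h3} and to compute the smallest root. I would clear the denominator to rewrite $h_{\beta,\kappa}(t)=0$ as the quadratic $(2-\kappa)\gamma t^2-(1-\kappa+\alpha)t+\beta=0$, whose discriminant is nonnegative precisely when $\alpha=\beta\gamma\le 3-2\sqrt{2}$; a short computation confirms that $3-2\sqrt2$ is the largest $\alpha$ for which the discriminant $(1-\kappa+\alpha)^2-4(2-\kappa)\alpha$ stays nonnegative for the relevant range, and the smaller root is the stated $t_*$. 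This verifies \textbf{h3} and shows the root lies in $(0,1/\gamma)$.

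Finally I would invoke Theorem~\ref{th:knt} directly with this $f$, this $h_{\beta,\kappa}$, and $\lambda=\kappa$, which delivers well-definedness, monotonicity and convergence of $\{t_k\}$ and $\{x_k\}$, the a priori bounds and the quadratic-type estimate, and the stationarity $F'(x_*)^\dagger F(x_*)=0$. The convergence-rate clause transfers verbatim: the Q-linear/R-linear statement holds in general, and the Q-quadratic/R-quadratic statement holds when $\kappa=0$ and $h_{\beta,0}'(t_*)<0$, exactly as in the last sentence of Theorem~\ref{th:knt}. Thus every assertion of Theorem~\ref{theo:Smale} follows, and the only real work is the verification of~\eqref{KH} and the root computation for \textbf{h3}.
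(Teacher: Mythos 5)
Your proposal matches the paper's proof essentially line for line: the same majorant $f(t)=t/(1-\gamma t)-2t$ with the same verification of \textbf{h1}--\textbf{h2}, the same route to \eqref{KH} via the second-derivative bound $\|F'(x_0)^{\dagger}\|\|F''(x)\|\leq 2\gamma/(1-\gamma\|x-x_0\|)^{3}$ (which the paper packages as Lemmas~\ref{lemma:qc1} and~\ref{lc}), the same quadratic $(2-\kappa)\gamma t^{2}-(1-\kappa+\alpha)t+\beta=0$ for $t_*$, and the same appeal to Theorem~\ref{th:knt} with $\lambda=\kappa$. One caveat you inherit from the paper rather than introduce yourself: the discriminant $(1-\kappa+\alpha)^{2}-4(2-\kappa)\alpha$ is nonnegative for small $\alpha$ precisely when $\alpha\leq(3-\kappa)-2\sqrt{2-\kappa}$, which for $\kappa>0$ is strictly smaller than $3-2\sqrt{2}$, so the claim that $\alpha\leq 3-2\sqrt{2}$ yields \textbf{h3} (asserted identically in the paper's proof and in your ``precisely when'' sentence) is only valid at $\kappa=0$.
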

We need the following results to prove the above theorem.
\begin{lemma} \label{lemma:qc1}
Let $\Omega\subseteq\mathbb{R}^n$ be an open set and $F:\Omega\to \mathbb{R}^m$ an analytic function.
 Suppose that
$x_0\in \mathbb{R}^n $  and  $\gamma$ is defined in
\eqref{eq:SmaleCond}. Then, for all $x\in B(x_{0}, 1/\gamma)$
it holds that
$$
\|F'(x_0)^{\dagger}\|\|F''(x)\| \leqslant  (2\gamma)/( 1- \gamma \|x-x_0\|)^3.
$$
	\end{lemma}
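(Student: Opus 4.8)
The plan is to prove this Smale-type bound by bounding the second derivative of the analytic function $F$ in terms of the $\gamma$ defined in \eqref{eq:SmaleCond}. This is the standard estimate that lets Smale's $\alpha$-theory be recovered as a special case of the majorant condition, with the majorant function being the classical $t/(1-\gamma t)$ type function.

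First I would expand $F''(x)$ in its Taylor series about $x_0$. Since $F$ is analytic, for $x \in B(x_0, 1/\gamma)$ we can write
\[
F''(x) = \sum_{n \geq 2} \frac{F^{(n)}(x_0)}{(n-2)!}(x-x_0)^{n-2}.
\]
Applying $F'(x_0)^{\dagger}$ on the left and taking norms, the triangle inequality gives
\[
\|F'(x_0)^{\dagger}\|\,\|F''(x)\| \leq \sum_{n \geq 2} \frac{\|F'(x_0)^{\dagger}\|\,\|F^{(n)}(x_0)\|}{(n-2)!}\,\|x-x_0\|^{n-2}.
\]

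Next I would invoke the definition of $\gamma$ in \eqref{eq:SmaleCond}, which yields the key bound $\|F'(x_0)^{\dagger}\|\,\|F^{(n)}(x_0)\|/n! \leq \gamma^{n-1}$ for every $n \geq 2$, equivalently $\|F'(x_0)^{\dagger}\|\,\|F^{(n)}(x_0)\| \leq n!\,\gamma^{n-1}$. Substituting this into the series and writing $u := \gamma\|x-x_0\| < 1$, each term is dominated by $\frac{n!}{(n-2)!}\gamma^{n-1}\|x-x_0\|^{n-2} = n(n-1)\gamma\,u^{\,n-2}$. Therefore
\[
\|F'(x_0)^{\dagger}\|\,\|F''(x)\| \leq \gamma \sum_{n \geq 2} n(n-1)\, u^{\,n-2}.
\]
The remaining step is purely the closed form of this power series: the identity $\sum_{n \geq 2} n(n-1)u^{\,n-2} = 2/(1-u)^3$ for $|u|<1$ (obtained by differentiating the geometric series twice) gives exactly
\[
\|F'(x_0)^{\dagger}\|\,\|F''(x)\| \leq \frac{2\gamma}{(1-\gamma\|x-x_0\|)^3},
\]
which is the claimed inequality.

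This lemma is essentially routine once the Taylor expansion and the definition of $\gamma$ are in hand, so I do not anticipate a genuine obstacle; the only point requiring care is the convergence of the differentiated series on $B(x_0,1/\gamma)$ and the correct handling of the multilinear derivatives $F^{(n)}(x_0)$ as operators, so that the submultiplicativity of the operator norm justifies the termwise estimate. The real work of connecting this to Theorem~\ref{th:knt} happens afterward: this bound on $\|F''\|$ integrates to show that $f(t) := t/(1-\gamma t) - 2t$ (or the relevant primitive) serves as a majorant function satisfying \eqref{KH}, \textbf{h1} and \textbf{h2}, after which Theorem~\ref{theo:Smale} follows from Theorem~\ref{th:knt} exactly as Theorems~\ref{th:kntL} and \ref{th:kntL12} did in the Lipschitz case.
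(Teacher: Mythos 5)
Your argument coincides with the paper's: the paper proves Lemma~\ref{lemma:qc1} only by deferring to Lemma 21 of \cite{MAX2}, whose proof is precisely this Taylor expansion of $F''$ about $x_0$, the termwise estimate from the definition of $\gamma$, and the identity $\sum_{n\ge 2} n(n-1)u^{n-2}=2/(1-u)^3$ for $|u|<1$. One caveat, which you inherit from the paper's formulation rather than introduce yourself: with $\gamma$ as literally written in \eqref{eq:SmaleCond}, where the factor $\|F'(x_0)^{\dagger}\|$ sits \emph{outside} the supremum, the definition only gives $\|F^{(n)}(x_0)\|/n!\le(\gamma/\|F'(x_0)^{\dagger}\|)^{n-1}$, hence $\|F'(x_0)^{\dagger}\|\,\|F^{(n)}(x_0)\|/n!\le\gamma^{n-1}\|F'(x_0)^{\dagger}\|^{2-n}$, so the ``key bound'' you invoke holds for all $n\ge 3$ only when $\|F'(x_0)^{\dagger}\|\ge 1$; with the composed definition $\gamma=\sup_{n>1}\|F'(x_0)^{\dagger}F^{(n)}(x_0)/n!\|^{1/(n-1)}$ (the one used in Theorem~\ref{theo:Smale12} and in the cited Lemma 21, with the conclusion stated for $\|F'(x_0)^{\dagger}F''(x)\|$) your computation is exact and needs no such restriction.
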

\begin{proof}
The proof follows the same pattern as the proof of Lemma 21 of \cite{MAX2}.
\end{proof}
\begin{lemma} \label{lc}
Let $\Omega\subseteq\mathbb{R}^n$ be an open set and $F:\Omega\to \mathbb{R}^m$ be twice continuously differentiable on $\Omega$.  If there exists  a \mbox{$f:[0,R)\to \mathbb {R}$} twice continuously differentiable and satisfying
$$
\|F'(x_0)^{\dagger}\|\|F''(x)\|\leqslant f''(\|x-x_0\|),
$$
for all $x\in \Omega$ such that  $\|x-x_0\|<R$, then $F$ and $f$ satisfy \eqref{KH}.
\end{lemma}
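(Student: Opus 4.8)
The plan is to derive the first–order majorant inequality \eqref{KH} from the given second–order bound by integrating $F''$ along the straight segment joining $x$ and $y$. Fix $x,y\in B(x_0,R)$ with $\|x-x_0\|+\|y-x\|<R$ and, for $\tau\in[0,1]$, set $z_\tau:=x+\tau(y-x)$. First I would check that the whole segment stays inside the region where the hypothesis is available: since $z_\tau-x_0=(x-x_0)+\tau(y-x)$, the triangle inequality gives $\|z_\tau-x_0\|\leq\|x-x_0\|+\tau\|y-x\|\leq\|x-x_0\|+\|y-x\|<R$, so the bound on $\|F''(z_\tau)\|$ applies at every $z_\tau$ and $f''$ is defined both at $\|z_\tau-x_0\|$ and at $\|x-x_0\|+\tau\|y-x\|$.

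Next I would invoke the fundamental theorem of calculus for the $L(\mathbb{R}^n,\mathbb{R}^m)$–valued map $\tau\mapsto F'(z_\tau)$, whose derivative is $\tau\mapsto F''(z_\tau)(y-x)$ because $F$ is twice continuously differentiable. This gives the operator identity
\[
F'(y)-F'(x)=\int_0^1 F''(z_\tau)(y-x)\,d\tau .
\]
Passing to operator norms, bounding $\|F''(z_\tau)(y-x)\|\leq\|F''(z_\tau)\|\,\|y-x\|$, multiplying by the constant $\|F'(x_0)^{\dagger}\|$ and moving it inside the integral yields
\[
\|F'(x_0)^{\dagger}\|\,\|F'(y)-F'(x)\|\leq\int_0^1\|F'(x_0)^{\dagger}\|\,\|F''(z_\tau)\|\,\|y-x\|\,d\tau .
\]

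Now I would apply the hypothesis $\|F'(x_0)^{\dagger}\|\|F''(z_\tau)\|\leq f''(\|z_\tau-x_0\|)$ pointwise under the integral sign, and then convert the integral. Substituting $s=\|x-x_0\|+\tau\|y-x\|$ (so $ds=\|y-x\|\,d\tau$) turns the resulting bound into $\int_{\|x-x_0\|}^{\|x-x_0\|+\|y-x\|}f''(s)\,ds=f'(\|x-x_0\|+\|y-x\|)-f'(\|x-x_0\|)$, which is exactly the right-hand side of \eqref{KH}.

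The operator-valued fundamental theorem of calculus and the change of variables are routine; the one genuine subtlety — and the step I would treat most carefully — is that $\|z_\tau-x_0\|$ is only \emph{bounded} by $\|x-x_0\|+\tau\|y-x\|$ rather than equal to it, so passing from $f''(\|z_\tau-x_0\|)$ to $f''(\|x-x_0\|+\tau\|y-x\|)$ requires $f''$ to be non-decreasing. Without this monotonicity the triangle-inequality estimate points the wrong way. This is guaranteed in the intended setting by the convexity of $f'$ in hypothesis {\bf h2}, and holds in every application ($f''\equiv L$ in the Lipschitz case and $f''(t)=2\gamma/(1-\gamma t)^3$ under Smale's condition), so I would record the use of this property explicitly and then conclude \eqref{KH}.
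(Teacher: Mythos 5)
Your proof is correct and is essentially the argument the paper relies on: the paper gives no proof of its own but defers to Lemma~22 of \cite{MAX2}, whose proof is exactly this integration of $F''$ along the segment followed by the change of variables. Your explicit flagging of the need for $f''$ to be non-decreasing (i.e.\ convexity of $f'$, hypothesis {\bf h2}) when replacing $f''(\|z_\tau-x_0\|)$ by $f''(\|x-x_0\|+\tau\|y-x\|)$ is a genuine point of care that the lemma's statement glosses over, and it is handled correctly.
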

\begin{proof}
The proof follows the same pattern as the proof of  Lemma 22 of \cite{MAX2}.
\end{proof}

\noindent
{\bf Proof of Theorem \ref{theo:Smale}.}
Consider the real function   $f:[0,1/\gamma) \to \mathbb{R}$ defined by
$$
f(t)=\frac{t}{1-\gamma t}-2t.
$$
It is straightforward to show that $f$ is analytic and that
$$
f(0)=0, \quad f'(t)=1/(1-\gamma t)^2-2, \quad f'(0)=-1, \quad
f''(t)=(2\gamma)/(1-\gamma t)^3, \quad f^{n}(0)=n!\,\gamma^{n-1},
$$
for $n\geq 2$. It follows from the latter equalities that f satisfies {\bf h1} and
{\bf h2}. Moreover, as $f''(t)=(2\gamma)/(1-\gamma t)^3,$ combining
Lemmas \ref{lemma:qc1} and \ref{lc}, we have $F$ and $f$ satisfy \eqref{KH} with
 $R=1/\gamma.$ Hence,
$$
h_{ \beta, \lambda}(t):= \beta -(2-\lambda)t+ t/(1-\gamma t)=\beta + \lambda  t+ f(t).
$$
Since $\lambda=\kappa$, we have  $0\leq \lambda < 1$ and $\lambda=-\kappa f'(0)\geq -\kappa f'(\beta)$, where the latter inequality follows from {\bf h2}.
Moreover, $\alpha=\beta \gamma\leq3-2\sqrt{2}$ implies that $\big((1-\kappa+\alpha)^2 -4(2-\kappa)\alpha\big)\geq 0$, i.e.,  $h_{ \beta, \lambda}$ satisfies {\bf h3} and   $t_*=\big(1-\kappa+\alpha-\sqrt{(1-\kappa+\alpha)^2 -4(2-\kappa)\alpha}\big)/(2\gamma(2-\kappa))$ is its smallest root.

Therefore, taking $f$, $h_{ \beta, \lambda}$, $\lambda$ and $t_*$  as defined above, all  the statements  of the theorem follow from Theorem~\ref{th:knt}. \qed

Under the Smale's $\alpha$-condition, Theorem~\ref{th:kntmod} becomes:
\begin{theorem}\label{theo:Smale12}
Let $\Omega\subseteq\mathbb{R}^n$ be an open set and $F:\Omega\to \mathbb{R}^m$ an analytic function. Take
 $x_0\in\Omega$ such that $\beta:=\|F'(x_0)^{\dagger}F(x_0)\|>0$ and $F'(x_0)$ is surjective.
Suppose that
\begin{equation} \label{eq:SmaleCond}
  \gamma :=\sup _{ n > 1 }\left\| \frac
{F'(x_0)^{\dagger}F^{(n)}(x_0)}{n !}\right\|^{1/(n-1)}<+\infty, \qquad B(x_0,1/\gamma)\subseteq \Omega.
\end{equation}
Consider the auxiliary function   $h_{ \beta, \kappa}:[0,\; 1/\gamma)\to \mathbb{R}$,
$$
h_{ \beta, \kappa}(t):= \beta -2t+ t/(1-\gamma t).
$$
If $$ \alpha:=\beta \gamma\leq3-2\sqrt{2},$$
then $h_{ \beta, \kappa}(t)$ has a smallest zero   $t_*=\big(1+\alpha-\sqrt{(1+\alpha)^2 -8\alpha}\big)/(4\gamma)$,  the  sequences
for solving  $ h_{ \beta, \kappa}(t)=0$ and $F(x)=~0$, with starting point $t_0=0$ and $x_0$,  respectively,
$$
    t_{k+1} ={t_{k}}-h_{ \beta,0}'(t_{k}) ^{-1}h_{ \beta, \kappa}(t_{k}),\quad x_{k+1}={x_k}-F'(x_k)^{\dagger}F(x_k), \quad k=0,1,\ldots\,,
$$
are well defined,  $\{t_{k}\}$ is strictly increasing, is contained in
  $[0,t_*)$, and converges Q-linearly to $\bar{t}_*$, $\{x_k\}$ is contained in $B(x_0,\bar{t}_*)$ and converges R-linearly to a point  $x_*\in B[x_0, \bar{t}_*]$ such that
$F'(x_*)^{\dagger}F(x_*)=0$,
$$
  \|x_{k+1}-x_{k}\|   \leq  t_{k+1}-t_{k} , \qquad \|x_*-x_{k}\|   \leq  \bar{t}_*-t_{k},\qquad  k=0, 1, \ldots\, ,
$$
$$
\|x_{k+1}-x_{k}\|\leq   \frac{t_{k+1}-t_{k}}{(t_{k}-t_{k-1})^2} \|x_k-x_{k-1}\|^2,
 \quad k=1,2, \ldots\,,
$$
$$
  \|F'(x_0)^{\dagger}F(x_k)\|\leq \left(  \frac{t_{k+1}-t_{k}}{t_{k}-t_{k-1}}\right)\|F'(x_0)^{\dagger}F(x_{k-1})\| ,\quad k=1,2, \ldots\,.
$$
If, additionally, $ \alpha:=\beta \gamma<3-2\sqrt{2}$, then the sequences $\{t_{k}\}$ and $\{x_{k}\}$ converge Q-quadratically and R-quadratically to $t_{*}$ and  $x_{*}$, respectively.
\end{theorem}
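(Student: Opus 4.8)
The plan is to mirror the proof of Theorem~\ref{theo:Smale}, but to invoke Theorem~\ref{th:kntmod} rather than Theorem~\ref{th:knt}, since here $F'(x_0)$ is surjective and the natural majorant quantity is $F'(x_0)^{\dagger}F$. Concretely, I would take $\bar f=f$ with
\[
f(t)=\frac{t}{1-\gamma t}-2t,\qquad t\in[0,1/\gamma),
\]
which is analytic and satisfies $f(0)=0$, $f'(t)=1/(1-\gamma t)^2-2$ so $f'(0)=-1$, and $f''(t)=2\gamma/(1-\gamma t)^3>0$, so $f'$ is convex and strictly increasing; this gives \textbf{h1} and \textbf{h2}. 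With this $f$ one has $h_{\beta}(t)=\beta+f(t)=\beta-2t+t/(1-\gamma t)$, matching the auxiliary function in the statement.

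First I would establish the second-order estimate underlying \eqref{KH1}. The crucial point is that in the definition of $\gamma$ in \eqref{eq:SmaleCond} the operator $F'(x_0)^{\dagger}$ sits \emph{inside} the supremum, so it may be carried through the Taylor expansion of $F''$:
\[
F'(x_0)^{\dagger}F''(x)=\sum_{n\ge 2}\frac{F'(x_0)^{\dagger}F^{(n)}(x_0)}{(n-2)!}(x-x_0)^{n-2}.
\]
Using $\|F'(x_0)^{\dagger}F^{(n)}(x_0)\|\le n!\,\gamma^{n-1}$, which is immediate from \eqref{eq:SmaleCond}, I obtain for every $x\in B(x_0,1/\gamma)$
\[
\|F'(x_0)^{\dagger}F''(x)\|\le \sum_{n\ge 2} n(n-1)\gamma^{n-1}\|x-x_0\|^{n-2}=\frac{2\gamma}{(1-\gamma\|x-x_0\|)^3}=f''(\|x-x_0\|),
\]
which is the surjective analogue of Lemma~\ref{lemma:qc1}. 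Integrating the identity $F'(x_0)^{\dagger}(F'(y)-F'(x))=\int_0^1 F'(x_0)^{\dagger}F''(x+s(y-x))(y-x)\,ds$ against this bound, exactly as in Lemma~\ref{lc}, then yields \eqref{KH1} with $\bar f=f$ on $B(x_0,1/\gamma)$; thus I may take $R=1/\gamma$.

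Next I would verify \textbf{h3} and identify the smallest root. Clearing denominators in $h_{\beta}(t)=0$ reduces it to the quadratic $2\gamma t^2-(1+\alpha)t+\beta=0$ with $\alpha=\beta\gamma$, whose discriminant is $(1+\alpha)^2-8\gamma\beta=(1+\alpha)^2-8\alpha=\alpha^2-6\alpha+1$; this is nonnegative precisely when $\alpha\le 3-2\sqrt 2$. Hence the hypothesis $\alpha\le 3-2\sqrt 2$ guarantees a real smallest root
\[
\bar t_*=\frac{1+\alpha-\sqrt{(1+\alpha)^2-8\alpha}}{4\gamma}\in(0,1/\gamma),
\]
which establishes \textbf{h3}. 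With \textbf{h1}, \textbf{h2}, \eqref{KH1} and \textbf{h3} verified and $\beta=\|F'(x_0)^{\dagger}F(x_0)\|>0$, Theorem~\ref{th:kntmod} applies with $\bar f=f$, the above $h_{\beta}$ and $t_*=\bar t_*$, and delivers every assertion of the theorem, including the residual bound $\|F'(x_0)^{\dagger}F(x_k)\|\le\big((t_{k+1}-t_k)/(t_k-t_{k-1})\big)\|F'(x_0)^{\dagger}F(x_{k-1})\|$ specialized to this $f$. The concluding quadratic-convergence clause follows from the strict inequality $\alpha<3-2\sqrt 2$: then the discriminant is strictly positive, $\bar t_*$ is a simple root of the strictly convex $h_{\beta}$, and since $h_{\beta}>0$ on $[0,\bar t_*)$ one has $h_{\beta}'(\bar t_*)<0$, triggering the Q- and R-quadratic rates.

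The only genuinely new work is the first step: recasting Lemma~\ref{lemma:qc1} so that $F'(x_0)^{\dagger}$ is absorbed into the power series for $F''$ instead of being estimated separately by its operator norm. I expect this to be the main (though routine) obstacle, because it is precisely what makes the surjective-case constant $\gamma$ of \eqref{eq:SmaleCond} the correct majorant data; everything downstream is a verbatim appeal to Theorem~\ref{th:kntmod}, so the proof can reasonably be abbreviated to ``the proof follows the pattern of Theorem~\ref{theo:Smale}, now using Theorem~\ref{th:kntmod}.''
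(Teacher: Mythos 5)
Your proposal is correct and takes essentially the same approach as the paper: the paper's own proof is the one-line remark that it follows the pattern of Theorem~\ref{theo:Smale} (now routed through Theorem~\ref{th:kntmod}), and your adaptation of Lemma~\ref{lemma:qc1} to keep $F'(x_0)^{\dagger}$ inside the Taylor expansion is exactly the modification that pattern requires. The supporting computations (the bound $2\gamma/(1-\gamma\|x-x_0\|)^{3}$, the quadratic $2\gamma t^{2}-(1+\alpha)t+\beta=0$ with discriminant $\alpha^{2}-6\alpha+1$, and the simple-root argument giving $h_{\beta}'(\bar t_*)<0$ when $\alpha<3-2\sqrt{2}$) all check out.
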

\begin{proof}
The proof follows the same pattern as the proof of  Theorem~\ref{theo:Smale}.
\end{proof}

\section{Final remarks}

We  presented a  new semi-local convergence analysis of the Gauss-Newton method for solving \eqref{eq:11}, where $F$ satisfies \eqref{rank1}, under a majorant condition. It would also be interesting to present a local convergence analysis of the Gauss-Newton method, under a majorant condition, for the problem under consideration. As a consequence, we would get convergence results for analytical functions under an $\gamma$-condition. This local analysis will be performed in the future.


\begin{thebibliography}{10}

\bibitem{ISRAEL2003}
A.~Ben-Israel and T.~N.~E. Greville.
\newblock {\em Generalized inverses}.
\newblock CMS Books in Mathematics/Ouvrages de Math\'ematiques de la SMC, 15.
  Springer-Verlag, New York, second edition, 2003.
\newblock Theory and Applications.

\bibitem{Dedieu2002}
J.-P. Dedieu and M.-H. Kim.
\newblock Newton's method for analytic systems of equations with constant rank
  derivatives.
\newblock {\em J. Complexity}, 18(1):187--209, 2002.

\bibitem{MR16517501}
J.~P. Dedieu and M.~Shub.
\newblock {N}ewton's method for overdetermined systems of equations.
\newblock {\em Math. Comp.}, 69(231):1099--1115, 2000.

\bibitem{MAX3}
O.~Ferreira, M.~Gon{\c{c}}alves, and P.~Oliveira.
\newblock Local convergence analysis of inexact {G}auss-{N}ewton like methods
  under majorant condition.
\newblock {\em J. Comput. Appl. Math.}, 236(9):2487--2498, 2012.

\bibitem{F08}
O.~P. Ferreira.
\newblock Local convergence of {N}ewton's method in {B}anach space from the
  viewpoint of the majorant principle.
\newblock {\em IMA J. Numer. Anal.}, 29(3):746--759, 2009.

\bibitem{FS04}
O.~P. Ferreira.
\newblock Local convergence of {N}ewton's method under majorant condition.
\newblock {\em J. Comput. Appl. Math.}, 235(5):1515--1522, 2011.

\bibitem{MAX1}
O.~P. Ferreira and M.~L.~N. Gon{\c{c}}alves.
\newblock Local convergence analysis of inexact {N}ewton-like methods under
  majorant condition.
\newblock {\em Comput. Optim. Appl.}, 48(1):1--21, 2011.

\bibitem{MAX4}
O.~P. Ferreira, M.~L.~N. Gon{\c{c}}alves, and P.~R. Oliveira.
\newblock Convergence of the {G}auss-{N}ewton method for convex composite
  optimization under a majorant condition.
\newblock {\em prepint, http://arxiv.org/abs/1107.3796.}

\bibitem{MAX2}
O.~P. Ferreira, M.~L.~N. Gon{\c{c}}alves, and P.~R. Oliveira.
\newblock Local convergence analysis of the {G}auss-{N}ewton method under a
  majorant condition.
\newblock {\em J. Complexity}, 27(1):111--125, 2011.

\bibitem{FS06}
O.~P. Ferreira and B.~F. Svaiter.
\newblock Kantorovich's majorants principle for {N}ewton's method.
\newblock {\em Comput. Optim. Appl.}, 42(2):213--229, 2009.

\bibitem{MR817125}
W.~M. H{\"a}ussler.
\newblock A {K}antorovich-type convergence analysis for the
  {G}auss-{N}ewton-method.
\newblock {\em Numer. Math.}, 48(1):119--125, 1986.

\bibitem{MR2437700}
N.~Hu, W.~Shen, and C.~Li.
\newblock Kantorovich's type theorems for systems of equations with constant
  rank derivatives.
\newblock {\em J. Comput. Appl. Math.}, 219(1):110--122, 2008.

\bibitem{LAW1}
C.~L. Lawson and R.~J. Hanson.
\newblock {\em Solving least squares problems}.
\newblock Prentice-Hall Inc., Englewood Cliffs, N.J., 1974.
\newblock Prentice-Hall Series in Automatic Computation.

\bibitem{Li2010}
C.~Li, N.~Hu, and J.~Wang.
\newblock Convergence behavior of {G}auss-{N}ewton's method and extensions of
  the {S}male point estimate theory.
\newblock {\em J. Complexity}, 26(3):268--295, 2010.

\bibitem{S86}
S.~Smale.
\newblock {N}ewton's method estimates from data at one point.
\newblock In {\em The merging of disciplines: new directions in pure, applied,
  and computational mathematics ({L}aramie, {W}yo., 1985)}, pages 185--196.
  Springer, New York, 1986.

\bibitem{XW10}
X.~Wang.
\newblock Convergence of {N}ewton's method and uniqueness of the solution of
  equations in {B}anach space.
\newblock {\em IMA J. Numer. Anal.}, 20(1):123--134, 2000.

\bibitem{MR2359960}
X.~Xu and C.~Li.
\newblock Convergence of {N}ewton's method for systems of equations with
  constant rank derivatives.
\newblock {\em J. Comput. Math.}, 25(6):705--718, 2007.

\bibitem{MR2429168}
X.~Xu and C.~Li.
\newblock Convergence criterion of {N}ewton's method for singular systems with
  constant rank derivatives.
\newblock {\em J. Math. Anal. Appl.}, 345(2):689--701, 2008.

\end{thebibliography}
\def\cprime{$'$}

\end{document}